\date{\today}
\tikzset{2cell/.style={-implies,double,double equal sign distance,shorten 
>=2pt, shorten <=3pt}}
\tikzset{2cellshort/.style={-implies,double,double equal sign distance,shorten 
>=4pt, shorten <=5pt}}
\tikzset{2cellr/.style={implies-,double,double equal sign distance,shorten 
>=3pt, shorten <=2pt}}
\tikzset{3cell/.style={-implies,double,double distance=2.5pt,shorten >=2pt, 
shorten <=3pt}}
\tikzset{labelsize/.style={font=\scriptsize}}
\tikzset{string/.style={very thick}}
\tikzset{
  pto/.style={->,postaction={decorate},
    decoration={
        markings,
        mark=at position 0.5 with {\arrow{|}}}
  },
}
\newtheorem{theorem}{Theorem}[section]
\newtheorem{lemma}[theorem]{Lemma}
\newtheorem{proposition}[theorem]{Proposition}
\newtheorem{corollary}[theorem]{Corollary}
\newdefinition{definition}[theorem]{Definition}
\newdefinition{example}[theorem]{Example}
\newdefinition{remark}[theorem]{Remark}
\newcommand{\dotminus}{\mathbin{\text{\@dotminus}}}
\newcommand{\@dotminus}{%
  \ooalign{\hidewidth\raise1ex\hbox{.}\hidewidth\cr$\m@th-$\cr}%
}
\mathchardef\mhyphen="2D
\newcommand{\pto}{}
\newcommand{\pgets}{}
\DeclareRobustCommand{\pto}{\mathrel{\mathpalette\p@to@gets\to}}
\DeclareRobustCommand{\pgets}{\mathrel{\mathpalette\p@to@gets\gets}}
\newcommand{\p@to@gets}[2]{%
  \ooalign{\hidewidth$\m@th#1\mapstochar\mkern5mu$\hidewidth\cr$\m@th#1\longrightarrow$\cr}%
}
\newcommand{\id}[1]{{\mathrm{id}_{#1}}}
\newcommand{\ob}[1]{{\mathrm{ob}(#1)}}
\newcommand{\op}{{\mathrm{op}}}
\newcommand{\cat}[1]{{\mathcal{#1}}}
\newcommand{\Lan}{\mathrm{Lan}}
\newcommand{\Ran}{\mathrm{Ran}}
\newcommand{\enCat}[1]{{{#1}\mhyphen\mathbf{Cat}}}
\newcommand{\defemph}[1]{\textbf{#1}}
\newcommand{\one}{I}
\newcommand{\tms}{\circ}
\newcommand{\mto}{\mathbin{\searrow}}
\newcommand{\mot}{\mathbin{\swarrow}}
\newcommand{\nonnegRmin}{{\overline{\mathbb{R}}^{\geq 0}_+}}
\newcommand{\nonnegRminmax}{{\overline{\mathbb{R}}^{\geq 0}_{\mathrm{max}}}}
\newcommand{\TV}{{\mathbf{2}}}
\newcommand{\UA}{\mathcal{U}}
\newcommand{\Pow}{\mathcal{P}}
\newcommand{\Isb}{\MN}
\newcommand{\MN}{\mathcal{M}} 
\newcommand{\psh}{\mathcal{P}}
\newcommand{\copsh}{\mathcal{P}^\dagger}
\begin{document}
	
\begin{frontmatter}
	\title {Completeness and injectivity}
	
	\author{Soichiro Fujii\corref{cor}\fnref{fn1}}
	\ead{s.fujii.math@gmail.com}
	
	\cortext[cor]{Corresponding author.}
	\fntext[fn1]{The author is supported by ERATO HASUO Metamathematics for Systems Design Project (No. JPMJER1603), JST.}
	
	\address{Research Institute for Mathematical Sciences, Kyoto University,
		Kyoto 606-8502, Japan}
	
\begin{abstract}
We show that for any quantale $\mathcal{Q}$, a $\mathcal{Q}$-category is skeletal and complete if and only if it is injective with respect to fully faithful $\mathcal{Q}$-functors. This is a special case of known theorems due to Hofmann and Stubbe, but we provide a different proof, using the characterisation of the MacNeille completion of a $\mathcal{Q}$-category as its injective envelope.
For Lawvere metric spaces, our results yield those of Kemajou, K\"unzi and Otafudu. We point out that their notion of Isbell convexity can be seen as a geometric formulation of categorical completeness for Lawvere metric spaces.
\end{abstract}

\begin{keyword}
	Enriched category \sep quantale \sep injective object \sep injective envelope \sep MacNeille completion \sep hyperconvex hull \sep tight span
	\MSC[2020] 18D20 \sep 06F07 \sep 54E35
\end{keyword}

\end{frontmatter}

\section{Introduction}\label{sec:intro}
The main purpose of this paper is to present a theorem claiming the equivalence between \emph{completeness} and \emph{injectivity} in the context of \emph{quantale-enriched categories}. 
In order to convey the idea of the theorem, we start with describing two classical theorems in order theory and metric space theory, and then a variant of the latter for directed metric spaces; the first and third of them are instances of our theorem (modulo minor modifications).

\begin{theorem}[{\cite{Banaschewski_Bruns}}]\label{thm:BB}
	A poset is a complete lattice if and only if it is injective.
\end{theorem}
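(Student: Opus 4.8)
The plan is to prove the two implications directly, in a way that anticipates the quantale-enriched arguments developed later; throughout, ``injective'' means injective with respect to order-embeddings in the category of posets and monotone maps.

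\emph{A complete lattice is injective.} Given a complete lattice $L$, an order-embedding $m\colon A\to B$, and a monotone map $f\colon A\to L$, I would extend $f$ to the monotone map
\[
  g\colon B\to L,\qquad g(b)=\bigvee\{\,f(a) : a\in A,\ m(a)\le b\,\}
\]
(an order-theoretic left Kan extension; dually one could use a meet). Monotonicity of $g$ is clear, and $g(m(a'))=\bigvee\{f(a):a\le a'\}=f(a')$ since $m$ reflects the order and $f$ is monotone, so the join is attained at $a=a'$; hence $g\circ m=f$.

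\emph{An injective poset is a complete lattice.} Every poset $A$ admits an order-embedding $y\colon A\to\overline{A}$ into a complete lattice, for instance its MacNeille completion (or, more cheaply, the complete lattice of down-sets, with $y(a)=\mathord{\downarrow}a$). If $A$ is injective, extending $\id{A}$ along $y$ yields a monotone retraction $r\colon\overline{A}\to A$ with $r\circ y=\id{A}$, exhibiting $A$ as a retract of a complete lattice. Such a retract is again complete: for a family $(a_i)_i$ in $A$, the element $r\bigl(\bigvee_i y(a_i)\bigr)$ is its join, being an upper bound (as $y(a_j)\le\bigvee_i y(a_i)$ gives $a_j=r(y(a_j))\le r(\bigvee_i y(a_i))$) and the least one (as any upper bound $b$ satisfies $\bigvee_i y(a_i)\le y(b)$, whence $r(\bigvee_i y(a_i))\le b$). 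A poset with all joins is a complete lattice, completing the argument.

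In the bare poset case there is no serious obstacle; the only points needing care are that the chosen completion really is an order-embedding, so that injectivity can be invoked, and that a monotone retract of a complete lattice is complete. The reason to favour the MacNeille completion over the down-set lattice is that only the former is the \emph{injective envelope} of $A$, and it is precisely this sharper fact---rather than the mere existence of an embedding into some complete lattice---that the general quantale-enriched theorem will exploit.
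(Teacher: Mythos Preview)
Your proof is correct. The forward direction (complete $\Rightarrow$ injective) is exactly the paper's argument: Lemma~\ref{lem:complete_injective} extends $f$ by the pointwise left Kan extension $g(d)=\bigvee_{c}\cat{D}(i(c),d)\ast f(c)$, which for $\cat{Q}=\TV$ is precisely your join over $\{a:m(a)\le b\}$.

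The converse direction, however, follows a genuinely different route. You embed $A$ into some complete lattice, invoke injectivity to split the embedding, and then argue that a monotone retract of a complete lattice is complete. The paper instead proves the sharper Theorem~\ref{thm:main_envelope}---that the MacNeille completion $i_\cat{C}\colon\cat{C}\to\MN\cat{C}$ is the \emph{injective envelope}---by characterising essential embeddings as dense and codense embeddings (Proposition~\ref{prop:ess_emb_characterisation}); it then concludes via Corollary~\ref{cor:inj_env_iso} that an injective $\cat{C}$ has $i_\cat{C}$ an isomorphism, whence $\cat{C}\cong\MN\cat{C}$ is skeletal and complete. Your retract argument is shorter and needs no analysis of essentiality; it is in fact closer to the proofs the paper attributes to Hofmann and Stubbe and explicitly contrasts with its own. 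What the paper's approach buys is the identification of the MacNeille completion as the injective envelope, with its extremal properties (Proposition~\ref{prop:inj_env_property}), which your argument does not yield. You already flag this trade-off in your final paragraph, so you are aware of the distinction.
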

Recall that a poset $\cat{E}$ is a \defemph{complete lattice} if it has all suprema (or equivalently all infima) of subsets of $\cat{E}$. On the other hand, a poset $\cat{E}$ (whose partial order relation we denote by $\preceq_\cat{E}$) is said to be \defemph{injective} if, whenever we have posets $\cat{C}$ and $\cat{D}$, a monotone map $f\colon \cat{C}\longrightarrow\cat{E}$ (i.e., a function such that for all $c,c'\in\cat{C}$, $c\preceq_\cat{C} c'$ implies $f(c) \preceq_\cat{E} f(c')$), and an order embedding $i\colon\cat{C}\longrightarrow \cat{D}$ (i.e., a function such that $c\preceq_\cat{C} c'$ if and only if $i(c) \preceq_\cat{D} i(c')$), there exists a (not necessarily unique) monotone map $g\colon\cat{D}\longrightarrow\cat{E}$ making the diagram
\[
\begin{tikzpicture}[baseline=-\the\dimexpr\fontdimen22\textfont2\relax ]
\node (TL) at (0,1) {$\cat{C}$};
\node (BL) at (0,-1) {$\cat{D}$};
\node (TR) at (2,1) {$\cat{E}$};

\draw [->] (TL) to node [auto,labelsize] {$f$} (TR);
\draw [>->] (TL) to node [auto,labelsize,swap] {$i$} (BL);
\draw [->,dashed] (BL) to node [auto,labelsize,swap] {$g$} (TR);
\end{tikzpicture}
\]
commute.
Notice that the property of being a complete lattice is described totally in terms of the \emph{internal} structure of a poset, whereas that of being injective is formulated solely in terms of its \emph{external} behaviour among all posets.
The fascination of this theorem due to Banaschewski and Bruns lies in the fact that it connects an internal property with an external one.

\begin{theorem}[{\cite{Aronszajn_Panitchpakdi_extension}}]\label{thm:AP}
	A metric space is hyperconvex if and only if it is injective.
\end{theorem}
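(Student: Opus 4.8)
To prove Theorem~\ref{thm:AP} I would argue the two implications separately, along the classical lines of~\cite{Aronszajn_Panitchpakdi_extension}. Throughout, morphisms of metric spaces are the nonexpansive (i.e.\ $1$-Lipschitz) maps and ``embedding'' means isometric embedding, and I recall that a metric space $\cat{E}$ is \emph{hyperconvex} precisely when every family of closed balls $\bigl(\overline{B}(x_i,r_i)\bigr)_{i\in I}$ in $\cat{E}$ with $d(x_i,x_j)\le r_i+r_j$ for all $i,j$ has nonempty intersection; the case of two balls is exactly metric convexity, so this single requirement captures the usual definition.

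For ``hyperconvex $\Rightarrow$ injective'', suppose $\cat{E}$ is hyperconvex, and let a nonexpansive $f\colon\cat{C}\to\cat{E}$ and an isometric embedding $i\colon\cat{C}\to\cat{D}$ be given; we may assume $\cat{C}\subseteq\cat{D}$ with $i$ the inclusion. I would apply Zorn's lemma to the set of nonexpansive maps $f'\colon A\to\cat{E}$ with $\cat{C}\subseteq A\subseteq\cat{D}$ and $f'|_{\cat{C}}=f$, ordered by extension (unions of chains being upper bounds), obtaining a maximal extension $f^*\colon A^*\to\cat{E}$. The key step is to show $A^*=\cat{D}$: if there were $b\in\cat{D}\setminus A^*$, then the family $\bigl(\overline{B}(f^*(a),d(a,b))\bigr)_{a\in A^*}$ of closed balls in $\cat{E}$ would be pairwise compatible, since $d\bigl(f^*(a),f^*(a')\bigr)\le d(a,a')\le d(a,b)+d(b,a')$, so hyperconvexity would furnish a point $z$ in their intersection, and then $a\mapsto f^*(a)$, $b\mapsto z$ would be a strictly larger nonexpansive extension, contradicting maximality. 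Hence $f^*$ is a nonexpansive map $\cat{D}\to\cat{E}$ extending $f$, so $\cat{E}$ is injective.

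For ``injective $\Rightarrow$ hyperconvex'', I would use two standard facts. (i)~Every metric space $X$ embeds isometrically into a space $\ell^\infty(S)$ of bounded real-valued functions on a set $S$ under the supremum metric — take $S$ the underlying set of $X$, a basepoint $x_0\in X$, and $x\mapsto\bigl(d(x,s)-d(x_0,s)\bigr)_{s\in S}$ — and every such $\ell^\infty(S)$ is hyperconvex, since a closed ball there is a product of closed real intervals and the Helly property for a family of closed intervals in $\mathbb{R}$ is immediate (the intersection is $[\sup_i(c_i-\rho_i),\inf_i(c_i+\rho_i)]$). (ii)~A nonexpansive retract of a hyperconvex space is hyperconvex: if $r\colon H\to\cat{E}$ satisfies $r\circ i=\id{\cat{E}}$ for an isometric embedding $i\colon\cat{E}\hookrightarrow H$ with $H$ hyperconvex, then a pairwise-compatible family of closed balls $\overline{B}(x_k,r_k)$ in $\cat{E}$ yields the pairwise-compatible family $\overline{B}(i(x_k),r_k)$ in $H$, whose common point $h$ satisfies $d(x_k,r(h))=d\bigl(r(i(x_k)),r(h)\bigr)\le d(i(x_k),h)\le r_k$, so $r(h)$ lies in all the original balls. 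Given these, if $\cat{E}$ is injective then applying the lifting property to $\id{\cat{E}}\colon\cat{E}\to\cat{E}$ along an isometric embedding $\cat{E}\hookrightarrow\ell^\infty(S)$ provides a nonexpansive retraction, and (i)--(ii) force $\cat{E}$ to be hyperconvex.

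I expect the only genuine obstacle to be the maximality step of the first implication: one must check that the balls whose common point is needed are really pairwise compatible, and then invoke hyperconvexity for a family indexed by the possibly infinite set $A^*$ — which is exactly what the definition supplies. The remaining ingredients (that the displayed map is an isometric embedding, that $\ell^\infty(S)$ is hyperconvex, and that hyperconvexity descends to nonexpansive retracts) are routine verifications.
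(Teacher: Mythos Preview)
Your argument is a correct and well-organised rendition of the classical Aronszajn--Panitchpakdi proof: the Zorn/one-point-extension argument for ``hyperconvex $\Rightarrow$ injective'' and the Kuratowski embedding into $\ell^\infty(S)$ followed by ``nonexpansive retracts of hyperconvex are hyperconvex'' for the converse are exactly the standard moves, and the verifications you flag (pairwise compatibility of the balls, hyperconvexity of $\ell^\infty(S)$, descent along retracts) go through as you describe.

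There is nothing to compare against, however: the paper does not prove Theorem~\ref{thm:AP}. It is quoted in the introduction purely as a classical motivating result, with attribution to~\cite{Aronszajn_Panitchpakdi_extension}, and the paper explicitly says that its own methods yield a common generalisation of Theorems~\ref{thm:BB} and~\ref{thm:KKO} only, pointing to~\cite{Jawhari_Misane_Pouzet_retracts} for the symmetric (metric-space) story. The MacNeille-completion/injective-envelope machinery developed later applies to $\cat{Q}$-categories, hence to Lawvere (possibly non-symmetric) metric spaces, but not directly to ordinary symmetric metric spaces; so the paper neither offers nor claims an alternative route to Theorem~\ref{thm:AP}. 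Your proof is therefore the right thing to supply if a proof is wanted here.
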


A metric space $\cat{E}$ (whose distance function we denote by $d_\cat{E}$) is \defemph{hyperconvex} if, for any (possibly infinite) family $((e_i,r_i))_{i\in I}$ of pairs of a point $e_i\in\cat{E}$ and a nonnegative real number $r_i$ satisfying $r_i+r_j\geq d_\cat{E}(e_i,e_j)$ for all $i,j\in I$, there exists a point $e\in\cat{E}$ such that $r_i\geq d_\cat{E}(e_i,e)$ for all $i\in I$.
Let us elaborate the definition. One can view each pair $(e_i,r_i)$ as the closed ball $B(e_i,r_i)$ in $\cat{E}$ with centre $e_i$ and radius $r_i$. Then the condition that $r_i+r_j\geq d_\cat{E}(e_i,e_j)$ for all $i,j\in I$ says that any two balls in the family \emph{potentially intersect}; indeed, if $r_i+r_j < d_\cat{E}(e_i,e_j)$ then $B(e_i,r_i)\cap B(e_j,r_j)=\emptyset$ by the triangle inequality.
The existence of a point $e\in\cat{E}$ such that $r_i\geq d_\cat{E}(e_i,e)$ for all $i\in I$ means that the intersection $\bigcap_{i\in I}B(e_i,r_i)$ of \emph{all} balls in the family is nonempty.
For example, $\mathbb{R}^2$ with the Euclidean metric is not hyperconvex, but $\mathbb{R}^2$ with the maximum metric is; the following are some balls in these metric spaces.
\[
\begin{tikzpicture}[baseline=-\the\dimexpr\fontdimen22\textfont2\relax ]
\draw (0,0.1) circle (0.9cm);
\draw (1.5,0.1) circle (0.75cm);
\draw (0.9,1.1) circle (0.5cm);
\end{tikzpicture}
\qquad\qquad\qquad
\begin{tikzpicture}[baseline=-\the\dimexpr\fontdimen22\textfont2\relax ]
\draw (-0.8,-0.8) rectangle (1,1);
\draw (0.7,0.7) rectangle (1.7,-0.3);
\draw (0,0.4) rectangle (1.2,1.6);
\end{tikzpicture}
\]

The definition of injectivity for metric spaces parallels that for posets. A metric space $\cat{E}$ is \defemph{injective} if, whenever we have metric spaces $\cat{C}$ and $\cat{D}$, a nonexpansive map $f\colon \cat{C}\longrightarrow\cat{E}$ (i.e., a function such that $d_\cat{C}(c,c')\geq d_\cat{E}(f(c), f(c'))$), and an isometric embedding $i\colon\cat{C}\longrightarrow \cat{D}$ (i.e., a function such that $d_\cat{C}(c, c')=d_\cat{D}(i(c), i(c'))$), there exists a (not necessarily unique) nonexpansive map $g\colon\cat{D}\longrightarrow\cat{E}$ such that $f=g\circ i$.
Again, this theorem of Aronszajn and Panitchpakdi is interesting in that it relates the internal property of hyperconvexity with the external one of injectivity.

The following theorem due to Kemajou, K\"unzi and Otafudu is the directed variant of Theorem \ref{thm:AP}. By a \defemph{di-space} we mean a possibly nonsymmetric (in the sense that $d_\cat{C}(c,c')$ may be different from $d_\cat{C}(c',c)$) generalisation of a metric space.
\begin{theorem}[{\cite{Kemajou_Kunzi_Otafudu_Isbell_hull}}]\label{thm:KKO}
	A di-space is Isbell convex if and only if it is injective.
\end{theorem}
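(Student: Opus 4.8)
The plan is to deduce Theorem \ref{thm:KKO} from our main theorem, via Lawvere's identification of di-spaces with quantale-enriched categories. Take for $\mathcal{Q}$ the Lawvere quantale $\nonnegRmin$, with $+$ as its multiplication and the reverse of the usual order. A di-space is then the same thing as a skeletal $\mathcal{Q}$-category: the distance function is the hom-assignment, reflexivity and the triangle inequality are the enriched identities and composition, and the separation axiom built into the notion of a di-space is skeletality. Under this dictionary a nonexpansive map is exactly a $\mathcal{Q}$-functor and an isometric embedding between di-spaces is exactly a fully faithful $\mathcal{Q}$-functor. The first step is to check that a di-space is injective in the category of di-spaces (with respect to isometric embeddings) if and only if it is injective with respect to all fully faithful $\mathcal{Q}$-functors, in the sense of our main theorem. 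One implication is immediate. For the other, given a fully faithful $\mathcal{Q}$-functor $i\colon\cat{C}\to\cat{D}$ and a $\mathcal{Q}$-functor $f\colon\cat{C}\to\cat{E}$ with $\cat{E}$ a di-space, I would pass to skeletons: $i$ induces an isometric embedding $\cat{C}_{\mathrm{sk}}\to\cat{D}_{\mathrm{sk}}$ of di-spaces, $f$ factors through the quotient $\cat{C}\to\cat{C}_{\mathrm{sk}}$ because $\cat{E}$ is skeletal, injectivity of $\cat{E}$ among di-spaces supplies an extension along $\cat{C}_{\mathrm{sk}}\to\cat{D}_{\mathrm{sk}}$, and precomposing it with $\cat{D}\to\cat{D}_{\mathrm{sk}}$ yields the required extension of $f$ along $i$.

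The second step is to recognise Isbell convexity as categorical completeness. For this I would identify the Isbell hull (or tight span) of a di-space $\cat{E}$, in the sense of Kemajou, K\"unzi and Otafudu, with the MacNeille completion $\overline{\cat{E}}$ of $\cat{E}$ regarded as a $\mathcal{Q}$-category: both are assembled from conjugate pairs of a presheaf and a copresheaf on $\cat{E}$ --- concretely, admissible pairs of a ``backward'' and a ``forward'' radius function that are as tight as possible --- carry the same distances, and are compatible with the canonical fully faithful embedding of $\cat{E}$. Granting this identification, $\cat{E}$ is Isbell convex exactly when $\cat{E}\hookrightarrow\overline{\cat{E}}$ is essentially surjective on objects, hence --- both $\cat{E}$ and $\overline{\cat{E}}$ being skeletal --- an isomorphism, and by the theory of the MacNeille completion this is in turn equivalent to $\cat{E}$ being complete. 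The only subtlety is that Isbell's condition is phrased with arbitrary admissible families of balls rather than conjugate pairs; but each admissible family generates a conjugate pair any realiser of which realises the family, and a short tightness argument shows that a realiser of the complete family attached to a conjugate pair realises that pair, so the two forms of the existence requirement are equivalent. Thus ``Isbell convex'' for di-spaces corresponds precisely to ``complete'' for $\mathcal{Q}$-categories, which also makes good on the remark in the abstract that Isbell convexity is a geometric formulation of completeness.

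Putting the two steps together proves Theorem \ref{thm:KKO}: a di-space $\cat{E}$ is skeletal, so by our main theorem it is injective with respect to fully faithful $\mathcal{Q}$-functors if and only if it is complete; by the first step the former is injectivity of $\cat{E}$ as a di-space, and by the second step the latter is Isbell convexity of $\cat{E}$.

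The step I expect to be the main obstacle is the identification, in the second step, of the concrete tight-span construction of Kemajou, K\"unzi and Otafudu with the abstract MacNeille completion: one must check that the di-space structures agree on the nose, that the two embeddings of $\cat{E}$ coincide, and that passing between an admissible family of balls and the conjugate pair it generates preserves the existence of a realiser. By contrast, the skeletal-reflection reduction of the first step --- and the attendant bookkeeping caused by the fact that ambient $\mathcal{Q}$-categories need not be skeletal --- is routine.
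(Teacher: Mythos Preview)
Your overall strategy---reduce to Theorem~\ref{thm:main} for $\cat{Q}=\nonnegRmin$, and identify Isbell convexity with categorical completeness---is exactly what the paper does in Section~\ref{sec:Isb_conv}. The main difference is one of economy in your second step. You propose to identify the Isbell hull of \cite{Kemajou_Kunzi_Otafudu_Isbell_hull} with the MacNeille completion $\MN\cat{E}$ on the nose and then read off Isbell convexity as essential surjectivity of $i_\cat{E}$; you flag this identification as the main obstacle. The paper avoids that identification entirely. It works directly with the set $\UA_\cat{E}$ of admissible pairs $(X,Y)$ (your ``admissible families of balls'') and proves that completeness is equivalent to the condition that every $(X,Y)\in\UA_\cat{E}$ is realised by some $c\in\cat{E}$ (Corollary~\ref{cor:completeness_alternatively}); since this condition is literally Isbell convexity, the equivalence is immediate. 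The two ingredients are precisely those you isolate in your ``subtlety'' paragraph: each admissible pair is dominated by a maximal one (Proposition~\ref{prop:Isbell_hull_alternatively} and the proposition after it), and completeness is equivalent to surjectivity of $i_\cat{E}$. So your argument is correct, but the piece you expect to be hardest is not actually needed---what you call the subtlety \emph{is} the proof.

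Two minor points. First, a di-space in the sense of \cite{Kemajou_Kunzi_Otafudu_Isbell_hull} takes only finite distances, so it is not quite the same as a skeletal $\nonnegRmin$-category; the paper acknowledges this gap and, like you, does not close it, simply noting that Theorem~\ref{thm:main} yields Theorem~\ref{thm:KKO} ``modulo the difference between Lawvere metric spaces and di-spaces''. Second, your first step---the skeletal-reflection argument reconciling injectivity among di-spaces with injectivity among all $\nonnegRmin$-categories---is more careful than the paper, which leaves this implicit.
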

The notion of Isbell convexity is a straightforward adaptation of hyperconvexity to the nonsymmetric setting. Precisely, a di-space $\cat{E}$ is \defemph{Isbell convex} if, for any (possibly infinite) family $((e_i,x_i,y_i))_{i\in I}$ of \emph{triples} of a point $e_i\in\cat{E}$ and nonnegative real numbers $x_i$ and $y_i$ satisfying $x_i+y_j\geq d_\cat{E}(e_i,e_j)$ for all $i,j\in I$, there exists a point $e\in\cat{E}$ such that $x_i\geq d_\cat{E}(e_i,e)$ and $y_i\geq d_\cat{E}(e,e_i)$ for all $i\in I$. 
We shall say more about this notion in Section \ref{sec:Isb_conv}. The definition of injectivity for di-spaces is completely parallel to that for metric spaces.

\medskip

Except that a precise relationship between the notions of complete lattice on the one hand, and of hyperconvex metric space and Isbell convex di-space on the other, is perhaps not apparent, Theorems \ref{thm:BB}--\ref{thm:KKO} look quite similar.
These results look even closer if one notes the fact that in all cases we have constructions of \emph{injective envelopes}.
Informally, an injective envelope of an object (i.e., poset, metric space or di-space) $\cat{C}$ is the smallest injective object $\overline{\cat{C}}$ to which $\cat{C}$ embeds; we shall give a precise definition in an abstract setting in Section~\ref{sec:formal_theory}.
The injective envelope of a poset, metric space and di-space is also known as its \emph{MacNeille completion} \cite{MacNeille_poset}, \emph{hyperconvex hull} or \emph{tight span} \cite{Isbell_six,Dress_tight_ext,Herrlich_hyperconvex}, and \emph{Isbell hull} or \emph{directed tight span} \cite{Kemajou_Kunzi_Otafudu_Isbell_hull,Hirai_Koichi_tight_span}, respectively.

\medskip

In this paper we shall prove a generalisation of Theorems \ref{thm:BB} and \ref{thm:KKO}. (For a generalisation of Theorem~\ref{thm:AP}, see \cite{Jawhari_Misane_Pouzet_retracts}.)  We unify posets and di-spaces by \emph{categories enriched over a quantale}. A quantale \cite{Mulvey_and} $\cat{Q}$ is a complete lattice $(Q,\preceq_{\cat{Q}})$ equipped with a compatible monoid structure $(Q,I_\cat{Q},\tms_{\cat{Q}})$. Given any quantale $\cat{Q}$, one can consider categories enriched over $\cat{Q}$, or \emph{$\cat{Q}$-categories} \cite{Kelly:enriched,Stubbe_quantaloid_dist,Stubbe_tensor_cotensor}.  Informally, a $\cat{Q}$-category $\cat{C}$ is a set $\ob{\cat{C}}$ equipped with a $\cat{Q}$-valued preorder relation $\cat{C}(-,-)\colon\ob{\cat{C}}\times\ob{\cat{C}}\longrightarrow Q$.
Taking $\cat{Q}=\TV$, the two-element quantale, we recover preordered sets as $\TV$-categories, whereas taking $\cat{Q}=\nonnegRmin$, the quantale of extended nonnegative real numbers with addition as the monoid structure, we obtain a mild generalisation of di-spaces (called \emph{Lawvere metric spaces}) as $\nonnegRmin$-categories \cite{Lawvere_metric}.

Theorems \ref{thm:BB} and \ref{thm:KKO} can be generalised as follows.
\begin{theorem}[{\cite{Hofmann_injective,Stubbe_note}}]
	\label{thm:main}
	Let $\cat{Q}$ be a quantale. A $\cat{Q}$-category is skeletal and complete if and only if it is injective (with respect to fully faithful $\cat{Q}$-functors).
\end{theorem}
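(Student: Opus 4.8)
The plan is to read the theorem off from the results of the preceding sections, which supply, for every $\cat{Q}$-category $\cat{C}$, a fully faithful $\cat{Q}$-functor $e_\cat{C}\colon\cat{C}\longrightarrow\overline{\cat{C}}$ into its MacNeille completion, together with the facts that $\overline{\cat{C}}$ is skeletal and complete, that $e_\cat{C}$ exhibits $\overline{\cat{C}}$ as the injective envelope of $\cat{C}$ (so $\overline{\cat{C}}$ is itself injective with respect to fully faithful $\cat{Q}$-functors, and $e_\cat{C}$ is an essential extension), and — immediately from the construction of $\overline{\cat{C}}$ — that $e_\cat{C}$ is an isomorphism precisely when $\cat{C}$ is skeletal and complete (one direction because $\overline{\cat{C}}$ is skeletal and complete, the other because completeness of $\cat{C}$ forces $e_\cat{C}$ to be essentially surjective). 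Granting all of this, it suffices to prove the single equivalence
\[
\cat{C}\text{ is injective}\qquad\Longleftrightarrow\qquad e_\cat{C}\colon\cat{C}\longrightarrow\overline{\cat{C}}\text{ is an isomorphism},
\]
since by the last of the quoted facts its right-hand side is equivalent to $\cat{C}$ being skeletal and complete.

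If $e_\cat{C}$ is an isomorphism then $\cat{C}\cong\overline{\cat{C}}$, and as injectivity with respect to fully faithful $\cat{Q}$-functors is preserved by isomorphisms and $\overline{\cat{C}}$ is injective, so is $\cat{C}$. Conversely, assume $\cat{C}$ is injective. Feeding the identity $\cat{Q}$-functor $\id{\cat{C}}\colon\cat{C}\longrightarrow\cat{C}$ and the fully faithful $\cat{Q}$-functor $e_\cat{C}\colon\cat{C}\longrightarrow\overline{\cat{C}}$ into the lifting property defining injectivity yields a $\cat{Q}$-functor $r\colon\overline{\cat{C}}\longrightarrow\cat{C}$ with $r\circ e_\cat{C}=\id{\cat{C}}$, exhibiting $\cat{C}$ as a retract of $\overline{\cat{C}}$. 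Being a retract is not yet enough to force $e_\cat{C}$ to be invertible, so here one invokes essentiality: since $r\circ e_\cat{C}=\id{\cat{C}}$ is fully faithful and $e_\cat{C}$ is an essential extension, $r$ is fully faithful. Now a fully faithful $\cat{Q}$-functor $r$ with a section, whose domain $\overline{\cat{C}}$ is skeletal, must be an isomorphism: it is surjective on objects because $c=r(e_\cat{C}c)$ for all $c$, and injective on objects because $rx=rx'$ forces $\overline{\cat{C}}(x,x')=\cat{C}(rx,rx')\succeq I_\cat{Q}$ and, symmetrically, $\overline{\cat{C}}(x',x)\succeq I_\cat{Q}$, so $x\cong x'$ and hence $x=x'$ by skeletality of $\overline{\cat{C}}$; being bijective on objects and fully faithful, $r$ is an isomorphism, and therefore so is $e_\cat{C}=r^{-1}$.

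The step I expect to be the crux is precisely this last use of essentiality: a retraction of $\overline{\cat{C}}$ onto $\cat{C}$ exists for formal reasons, but it is only the minimality of the injective envelope that upgrades it to an isomorphism; without it one would instead have to argue separately that retracts of complete $\cat{Q}$-categories are complete. A subsidiary point that must be handled with (slight) care is the passage between isomorphism and equality of $\cat{Q}$-functors: because $2$-cells in the $2$-category of $\cat{Q}$-categories are mere inequalities, any isomorphism $g\circ i\cong f$ of $\cat{Q}$-functors into the skeletal $\cat{Q}$-category $\cat{C}$ is in fact an equality $g\circ i=f$, so the lifting problems above may be read literally and the extensions produced are strict. (For the implication ``skeletal and complete $\Rightarrow$ injective'' one could bypass the injective envelope entirely, constructing a lift of any $f$ along a fully faithful $i$ as the pointwise left Kan extension $\Lan_i f$ — available because a complete $\cat{Q}$-category is cocomplete — which restricts along $i$ back to $f$ up to isomorphism, hence, by skeletality, on the nose; but the argument through the MacNeille completion is the one in keeping with the present paper.)
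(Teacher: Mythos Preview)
Your argument for the implication \emph{injective $\Rightarrow$ skeletal and complete} matches the paper's: you re-prove Corollary~\ref{cor:inj_env_iso} inline (obtain a retraction $r$ of $e_\cat{C}$ by injectivity, upgrade it to a fully faithful map by essentiality of $e_\cat{C}$, and conclude it is bijective on objects using skeletality of $\overline{\cat{C}}$), and then transport skeletality and completeness along the resulting isomorphism $\cat{C}\cong\overline{\cat{C}}$. That part is fine and essentially identical to the paper.

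Where you diverge is in the direction \emph{skeletal and complete $\Rightarrow$ injective}. You deduce it from the biconditional ``$e_\cat{C}$ is an isomorphism $\iff$ $\cat{C}$ is skeletal and complete'', justifying the nontrivial half by the claim that completeness forces $e_\cat{C}$ to be essentially surjective. That claim is true, but it is not something the paper establishes before proving Theorem~\ref{thm:main}: it is (essentially) Proposition~\ref{prop:Isbell_hull_alternatively} together with the surjectivity statement appearing only later in Section~\ref{sec:Isb_conv}. The paper instead proves this direction directly as Lemma~\ref{lem:complete_injective}, via the pointwise Kan extension $\Lan_i f$ --- precisely the argument you relegate to a parenthetical ``alternative''. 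So your final remark inverts the situation: the Kan-extension argument \emph{is} the paper's proof of that implication, not a bypass of it. Note too that you cannot avoid Lemma~\ref{lem:complete_injective} anyway, since the fact that $\overline{\cat{C}}$ is injective (which you take as given from Theorem~\ref{thm:main_envelope}) is itself proved in the paper by applying Lemma~\ref{lem:complete_injective} to $\overline{\cat{C}}$; your route therefore uses that lemma implicitly and then adds an extra ingredient (essential surjectivity of $e_\cat{C}$) that the paper does not need at this point.
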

The terms appearing in the above statement will be introduced in Section~\ref{sec:Q-cat}.
Actually, this theorem is known in much more general settings. In \cite[Theorem 2.7]{Hofmann_injective} it is proved for $\mathcal{T}$-categories for a topological theory $\mathcal{T}$, and in \cite{Stubbe_note} it is proved for categories enriched over a quantaloid, with attribution to Hofmann for private communication. 
See also \cite[Proposition 5.2]{Stubbe_double} and \cite[Theorem 10.1]{Shen_Tholen_topological}. 

The MacNeille completion can be generalised from posets to $\cat{Q}$-categories; see \cite[Definition 7.2]{Garner_topological} and \cite[Definition 5.5.2]{Shen_thesis}.
It is also known that the MacNeille completion for $\nonnegRmin$-categories coincides with the Isbell hull \cite{Willerton_tight_span}.
We show that the abstract characterisation of the MacNeille completion of a poset as its injective envelope \cite{Banaschewski_Bruns} also extends to $\cat{Q}$-categories.
\begin{theorem}
	\label{thm:main_envelope}
	Let $\cat{Q}$ be a quantale. For any $\cat{Q}$-category $\cat{C}$, its MacNeille completion $\Isb\cat{C}$ is the injective envelope of $\cat{C}$.
\end{theorem}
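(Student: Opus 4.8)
The plan is to verify the three properties that, according to the abstract development in Section~\ref{sec:formal_theory}, characterise $\Isb\cat{C}$ as \emph{the} injective envelope of $\cat{C}$: that $\Isb\cat{C}$ is injective, that the canonical embedding $y_\cat{C}\colon\cat{C}\to\Isb\cat{C}$ (sending an object $c$ to the Isbell-closed pair $(\cat{C}(-,c),\cat{C}(c,-))$) is fully faithful, and --- this being the real content --- that $y_\cat{C}$ is \emph{essential}, meaning that any $\cat{Q}$-functor $g$ out of $\Isb\cat{C}$ for which $g\circ y_\cat{C}$ is fully faithful is itself fully faithful. I will freely use the three basic identities governing $\Isb\cat{C}$ (with the conventions fixed in Section~\ref{sec:Q-cat}): the Yoneda equalities $\Isb\cat{C}(y_\cat{C}c,(\phi,\psi))=\phi(c)$ and $\Isb\cat{C}((\phi,\psi),y_\cat{C}c)=\psi(c)$; the hom formula $\Isb\cat{C}((\phi,\psi),(\phi',\psi'))=\psh\cat{C}(\phi,\phi')=\bigwedge_{c}\bigl(\phi'(c)\mot\phi(c)\bigr)$; and the fact, built into the definition of an Isbell-closed pair, that $\phi$ and $\psi$ determine each other through the Isbell maps, in particular $\phi(c)=\bigwedge_{c'}\bigl(\psi(c')\mto\cat{C}(c,c')\bigr)$, i.e.\ $\phi$ is the largest presheaf with $\psi(c')\tms\phi(c)\preceq\cat{C}(c,c')$ for all $c,c'$.

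For injectivity I would argue directly, so as not to rely on Theorem~\ref{thm:main} (whose proof in this paper proceeds through the present statement). First, the presheaf $\cat{Q}$-category $\psh\cat{C}$ is injective: given a fully faithful $\cat{Q}$-functor $i\colon\cat{A}\to\cat{B}$ and a $\cat{Q}$-functor $f\colon\cat{A}\to\psh\cat{C}$, an extension of $f$ along $i$ is obtained by right Kan extension, computed pointwise by the meet formula in $\cat{Q}$ --- which exists because $\cat{Q}$ is a complete lattice --- and this extension genuinely restricts to $f$ precisely because $i$ is fully faithful. Secondly, $\Isb\cat{C}$ is a retract of $\psh\cat{C}$: the Isbell closure operator corestricts to $\psh\cat{C}\to\Isb\cat{C}$, which composed with the inclusion $\Isb\cat{C}\hookrightarrow\psh\cat{C}$ is the identity on $\Isb\cat{C}$ (its objects being already closed). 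Since a retract of an injective object is injective, $\Isb\cat{C}$ is injective. Full faithfulness of $y_\cat{C}$ is immediate from the Yoneda equalities, which give $\Isb\cat{C}(y_\cat{C}c,y_\cat{C}c')=\cat{C}(c,c')$.

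The heart of the proof --- and the step I expect to be the main obstacle --- is essentiality. Let $g\colon\Isb\cat{C}\to\cat{F}$ be a $\cat{Q}$-functor with $g\circ y_\cat{C}$ fully faithful, and write $\Phi=(\phi,\psi)$ for a generic object of $\Isb\cat{C}$. Since $g$ is a $\cat{Q}$-functor we always have $\Isb\cat{C}(A,B)\preceq\cat{F}(gA,gB)$, so only the reverse inequalities are at issue. The key lemma I would prove first is
\[
\cat{F}\bigl(g(y_\cat{C}c),g\Phi\bigr)=\phi(c)\qquad\text{for all }c\text{ and all }\Phi .
\]
Here $\succeq$ is just functoriality together with Yoneda. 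For $\preceq$, compose in $\cat{F}$: for each $c'$,
\[
\psi(c')\tms\cat{F}\bigl(g(y_\cat{C}c),g\Phi\bigr)\preceq\cat{F}\bigl(g\Phi,g(y_\cat{C}c')\bigr)\tms\cat{F}\bigl(g(y_\cat{C}c),g\Phi\bigr)\preceq\cat{F}\bigl(g(y_\cat{C}c),g(y_\cat{C}c')\bigr)=\cat{C}(c,c'),
\]
using $\psi(c')=\Isb\cat{C}(\Phi,y_\cat{C}c')\preceq\cat{F}(g\Phi,g(y_\cat{C}c'))$ for the first inequality and full faithfulness of $g\circ y_\cat{C}$ for the last equality; hence $\cat{F}(g(y_\cat{C}c),g\Phi)\preceq\psi(c')\mto\cat{C}(c,c')$, and taking the meet over $c'$ and invoking the closure identity yields $\cat{F}(g(y_\cat{C}c),g\Phi)\preceq\phi(c)$. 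Feeding this lemma, applied to both $\Phi$ and $\Phi'$, into the composition law once more gives
\[
\cat{F}(g\Phi,g\Phi')\tms\phi(c)=\cat{F}(g\Phi,g\Phi')\tms\cat{F}\bigl(g(y_\cat{C}c),g\Phi\bigr)\preceq\cat{F}\bigl(g(y_\cat{C}c),g\Phi'\bigr)=\phi'(c)
\]
for every $c$, whence $\cat{F}(g\Phi,g\Phi')\preceq\bigwedge_{c}\bigl(\phi'(c)\mot\phi(c)\bigr)=\psh\cat{C}(\phi,\phi')=\Isb\cat{C}(\Phi,\Phi')$. Thus $g$ is fully faithful, establishing essentiality.

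The bookkeeping that genuinely requires care --- rather than routine manipulation --- is keeping straight the two internal homs $\mto$ and $\mot$ of the possibly non-commutative quantale $\cat{Q}$ together with the variances of presheaves, copresheaves and the Isbell maps: it is exactly because the closure clause expresses $\phi$ in terms of $\psi$ through $\mto$, while the hom-objects of $\Isb\cat{C}$ are read off the presheaf component through $\mot$, that the two estimates above dovetail. Once essentiality is in hand, combining it with injectivity and full faithfulness gives, by the characterisation and uniqueness of injective envelopes from Section~\ref{sec:formal_theory}, that $\Isb\cat{C}$ is the injective envelope of $\cat{C}$. As a sanity check, specialising to $\cat{Q}=\TV$ recovers the argument of Banaschewski and Bruns that the MacNeille completion of a poset is its injective hull, which is the classical case being generalised here.
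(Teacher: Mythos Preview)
Your proof is correct and follows the same two-part strategy as the paper (show $\Isb\cat{C}$ is injective, show the canonical embedding is essential), but each part is organised a little differently. For injectivity, the paper simply cites Proposition~\ref{prop:Isb_skeletal_complete} (that $\Isb\cat{C}$ is skeletal and complete) and then Lemma~\ref{lem:complete_injective}; you instead show $\psh\cat{C}$ is injective and pass to the retract $\Isb\cat{C}$, which is equally valid and uses the same Kan-extension idea under the hood. For essentiality, the paper first isolates a general characterisation (Proposition~\ref{prop:ess_emb_characterisation}: a $\cat{Q}$-functor is an essential embedding iff it is a dense and codense embedding) and then observes that $i_\cat{C}$ is dense and codense because $(i_\cat{C})^\ast=p_\cat{C}$ and $(i_\cat{C})_\ast=p^\dagger_\cat{C}$ are the fully faithful projections; your direct argument, staged through the ``key lemma'' $\cat{F}(g(i_\cat{C}c),g\Phi)=\phi(c)$, is exactly the specialisation of that proof to $f=i_\cat{C}$ --- your key lemma is the codensity half, and the final step is the density half. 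Your route is slightly more streamlined for this one theorem, while the paper's detour through density/codensity buys an independent result of interest (paralleling Banaschewski--Bruns's Lemma~3) that applies to arbitrary essential embeddings. One notational nit: the paper reserves $y_\cat{C}$ for the Yoneda embedding into $\psh\cat{C}$ and writes $i_\cat{C}$ for the embedding into $\Isb\cat{C}$.
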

In fact, we shall prove Theorem \ref{thm:main} using Theorem \ref{thm:main_envelope}; our proof of Theorem~\ref{thm:main} extends the proof of Theorem \ref{thm:BB} in \cite{Banaschewski_Bruns}, and is different from those adopted in \cite{Hofmann_injective,Stubbe_note,Stubbe_double,Shen_Tholen_topological}. If one assumes enough background on enriched categories, the latter proofs are arguably shorter, but we believe that our proof illuminating the role of the MacNeille completion is of independent interest.

\medskip

The outline of this paper is as follows. 
In Sections \ref{sec:quantale} and \ref{sec:Q-cat} we introduce background materials on quantales and $\cat{Q}$-categories respectively.
In Section \ref{sec:Isbell} we explain the MacNeille completion for $\cat{Q}$-categories. Then, after a brief review of a formal theory of injective envelopes in Section \ref{sec:formal_theory}, we prove Theorems \ref{thm:main_envelope} and \ref{thm:main} in Section \ref{sec:complete_iff_injective}.
Finally, in Section~\ref{sec:Isb_conv}, we revisit the notion of Isbell convexity of \cite{Kemajou_Kunzi_Otafudu_Isbell_hull}, and point out that it is equivalent to categorical completeness.

\subsection*{Acknowledgements}
I am grateful to the organisers of the Third Pan-Pacific International Conference on Topology and Applications for giving me an opportunity to present this material. I would like to thank Lili Shen for funding my visit to Chengdu, and for providing valuable comments on this work, including the information on the works of Hofmann, Stubbe, Shen and Tholen.

\section{Quantales}\label{sec:quantale}
We first introduce \emph{quantales} \cite{Mulvey_and}, also known as \emph{complete 
	idempotent semirings} \cite{CGQ_duality,LMS_idem_fa}.
They are the enriching (or base) categories
in the portion of enriched category theory we shall be concerned with.

\begin{definition}\label{def:quantale}
	A (unital) \defemph{quantale} $\cat{Q}$
	is a complete lattice $(Q,\preceq_\cat{Q})$
	equipped with a monoid structure $(Q,\one_\cat{Q},\tms_\cat{Q})$
	such that the multiplication $\tms_\cat{Q}$ preserves arbitrary suprema in each variable:
	$(\bigvee_{i\in I} y_i)\tms_\cat{Q} x = \bigvee_{i\in I}(y_i\tms_\cat{Q} 
	x)$ and 
	$y\tms_\cat{Q} (\bigvee_{i\in I} x_i) = \bigvee_{i\in I}(y\tms_\cat{Q} x_i)$.
	We often omit the subscript $\cat{Q}$ from the data of a quantale,
	writing it simply as $\cat{Q}=(Q,\preceq,\one,\tms)$.
\end{definition}

Notice that in the definition of quantale,
we do {not} assume commutativity of the multiplication $\tms$ by default; 
quantales with commutative multiplication are said to be 
\defemph{commutative}.

\medskip
The notion of \emph{adjunction} is central to category theory. In this paper we shall only need the particularly simple case of adjunctions between posets, also known as \emph{Galois connections}.
Recall that given posets $(L,\preceq)$ and $(L',\preceq')$,
two functions $f\colon L\longrightarrow L'$
and $u\colon L'\longrightarrow L$ are said to form an \defemph{adjunction}
if, for any $l\in L$ and $l'\in L'$,
\begin{equation}\label{eqn:adjointness_poset}
f(l)\preceq' l' \iff l\preceq u(l')
\end{equation}
holds. 
We call $f$ the \defemph{left adjoint} of $u$ and $u$ the \defemph{right 
	adjoint} of $f$,
and write them as $f\dashv u$.
The adjointness relation (\ref{eqn:adjointness_poset}) is powerful enough to 
determine each of the functions $f$ and $u$ from the other, and force both of 
them to be \emph{monotone} functions \cite{Street_core}.

We record the following well-known fact.
\begin{proposition}\label{prop:adj_funct_thm}
	Let $(L,\preceq)$ be a complete lattice and $(L',\preceq')$ be a poset.
	A function $f\colon L\longrightarrow L'$ preserves arbitrary suprema
	if and only if there exists a function $u\colon L'\longrightarrow L$
	such that $f\dashv u$.
\end{proposition}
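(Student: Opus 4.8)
The plan is to prove the two implications separately; the ``if'' part (an adjunction yields preservation of suprema) is a routine consequence of the adjointness relation, while the substantive part is the ``only if'' direction, which is the complete-lattice case of the adjoint functor theorem. For that direction, assuming $f$ preserves arbitrary suprema, I would define $u\colon L'\longrightarrow L$ by
\[
u(l') \coloneqq \bigvee\{l\in L \mid f(l)\preceq' l'\},
\]
the supremum existing because $(L,\preceq)$ is a complete lattice, and then verify the adjointness condition (\ref{eqn:adjointness_poset}).

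One half of (\ref{eqn:adjointness_poset}) is immediate: if $f(l)\preceq' l'$ then $l$ lies in the set defining $u(l')$, so $l\preceq u(l')$. For the converse I would first observe that preservation of binary suprema forces $f$ to be monotone, since $a\preceq b$ gives $a\vee b = b$ and hence $f(a)\vee f(b) = f(a\vee b) = f(b)$, i.e.\ $f(a)\preceq' f(b)$. Writing $S = \{l\in L\mid f(l)\preceq' l'\}$, preservation of the supremum $\bigvee S$ then yields
\[
f(u(l')) = \bigvee\{f(l)\mid l\in S\}\preceq' l',
\]
the inequality holding because $l'$ is an upper bound of $\{f(l)\mid l\in S\}$. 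Combining this with monotonicity of $f$, any $l\preceq u(l')$ satisfies $f(l)\preceq' f(u(l'))\preceq' l'$, which completes the verification that $f\dashv u$.

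For the ``if'' direction, suppose $f\dashv u$; then $f$ is monotone, as recorded just before the statement (citing \cite{Street_core}). Given a family $(l_i)_{i\in I}$ in $L$ with $l=\bigvee_{i\in I}l_i$, monotonicity makes $f(l)$ an upper bound of $(f(l_i))_{i\in I}$, and if $f(l_i)\preceq' l'$ for all $i$ then $l_i\preceq u(l')$ for all $i$ by adjointness, so $l\preceq u(l')$, so $f(l)\preceq' l'$ again by adjointness; hence $f(l) = \bigvee_{i\in I}f(l_i)$. I do not anticipate any serious obstacle here: the only point deserving a word of care is that ``preserves arbitrary suprema'' must be read as including the empty family, so that the suprema $\bigvee\{f(l)\mid l\in S\}$ used above are guaranteed to exist in $L'$ --- equivalently, that $L'$ has a least element, namely the image under $f$ of the least element of $L$. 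This is precisely the situation in which the solution-set condition of the general adjoint functor theorem is automatically satisfied.
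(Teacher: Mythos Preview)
Your argument is correct and is precisely the standard proof of this fact. Note, however, that the paper does not actually supply a proof: the proposition is introduced with ``We record the following well-known fact'' and is left unproved. So there is no paper proof to compare against; your proposal simply fills in the omitted (and entirely routine) details.
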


As the first application of Proposition~\ref{prop:adj_funct_thm}, 
observe that in any quantale $\cat{Q}=(Q,\preceq,$ $\one,\tms)$
there are two \emph{residuation} operations:
for any $x\in Q$, the function $(-)\tms x\colon Q\longrightarrow Q$
preserves arbitrary suprema, and hence has a right adjoint 
$(-)\mot x\colon Q\longrightarrow Q$
called the \defemph{right extension along $x$};
similarly, for any $y\in Q$ the function 
$y\tms (-)$ has a right adjoint $y\mto (-)$,
called the \defemph{right lifting along $y$}.
Of course, in a commutative quantale the right extensions and right liftings 
coincide.
The defining adjointness relations are:
\begin{equation}\label{eqn:residual_adjointness}
y\preceq z\mot x \iff y\tms x\preceq z \iff x\preceq y\mto z.
\end{equation}
Focusing on the leftmost and rightmost formulas of 
(\ref{eqn:residual_adjointness}), we obtain
\[
z\mot x\succeq y \iff x\preceq y\mto z.
\]
That is, $z\mot (-)\colon Q\longrightarrow Q$ (regarded as a 
function from the poset $\cat{Q}=(Q,\preceq)$ to its
dual $\cat{Q}^\op=(Q,\succeq)$) is the left 
adjoint of $(-)\mto z\colon 
Q\longrightarrow {Q}$ (from $\cat{Q}^\op$ to $\cat{Q}$)
for any $z\in Q$.
The three types of adjunctions
\begin{equation*}
\begin{tikzpicture}[baseline=-\the\dimexpr\fontdimen22\textfont2\relax ]
\node (L) at (0,0) {$\cat{Q}$};
\node (R) at (3,0) {$\cat{Q}$};

\draw [->, bend left=20] (L) to node [auto,labelsize] {$(-)\tms x$} (R);
\draw [<-, bend right=20] (L) to node [auto,labelsize,swap] {$(-)\mot x$} 
(R);

\node [rotate=-90]at (1.5,0) {$\dashv$};
\end{tikzpicture}
\qquad
\begin{tikzpicture}[baseline=-\the\dimexpr\fontdimen22\textfont2\relax ]
\node (L) at (0,0) {$\cat{Q}$};
\node (R) at (3,0) {$\cat{Q}$};

\draw [->, bend left=20] (L) to node [auto,labelsize] {$y\tms (-)$} (R);
\draw [<-, bend right=20] (L) to node [auto,labelsize,swap] {$y\mto (-)$} 
(R);

\node [rotate=-90]at (1.5,0) {$\dashv$};
\end{tikzpicture}
\qquad
\begin{tikzpicture}[baseline=-\the\dimexpr\fontdimen22\textfont2\relax ]
\node (L) at (0,0) {$\cat{Q}$};
\node (R) at (3,0) {$\cat{Q}^\op$};

\draw [->, bend left=20] (L) to node [auto,labelsize] {$z\mot (-)$} (R);
\draw [<-, bend right=20] (L) to node [auto,labelsize,swap] {$(-)\mto z$} 
(R);

\node [rotate=-90]at (1.5,0) {$\dashv$};
\end{tikzpicture}
\end{equation*}
are fundamental in the theory of quantales. 

\medskip

We conclude this section with some examples of quantales.

\begin{example}[\cite{Lawvere_metric}]
	The {two-element quantale} $\TV=(\{\bot,\top\}, 
	\vdash ,\top, \wedge)$.
	The underlying poset of this quantale consists of $\top$ for ``truth'' and $\bot$ for ``falsity'', ordered by the entailment relation $\vdash$, so that $\bot \vdash \top$.
	The monoid structure is given by conjunction $\wedge$.
\end{example}

\begin{example}[\cite{Lawvere_metric}]
	The \defemph{Lawvere quantale} $\nonnegRmin=([0,\infty], \geq, 0,+)$.
	Here, $([0,\infty],\geq)$ is the poset $([0,\infty),\geq)$ of all nonnegative real numbers ordered by the \emph{opposite} $\geq$ of the usual order $\leq$, extended with the \emph{least} element $\infty$.
	The $+$ operation is the extension of addition for nonnegative real numbers to $[0,\infty]$ so that $x+\infty=\infty+x=\infty$ for all $x\in[0,\infty]$. (This extension is forced by the axioms of quantale.)
	The right extension/right lifting is given by an extension of the
	\emph{truncated subtraction $\dotminus$}, defined by
	$u \dotminus t = \max\{u-t, 0\}$ for nonnegative real numbers $t$ and $u$.
	Precisely, for $y,z\in[0,\infty]$, 
	\[
	z\mot y=y\mto z=\begin{cases}
	z\dotminus y & \text{ if } y,z\in[0,\infty)\\
	0            & \text{ if } y=\infty\\
	\infty       & \text{ if } z=\infty \text{ and } y\in [0,\infty).
	\end{cases}
	\]
	This quantale is introduced in \cite{Lawvere_metric} for a categorical approach to the theory of metric spaces. 
\end{example}

\begin{example}[\cite{Lawvere_metric}]
	$\nonnegRminmax=([0, \infty],\geq, 0, \max)$.
	Its underlying poset $([0,\infty],\geq)$ is the same as that of $\nonnegRmin$. We take the binary max operation with respect to the usual ordering $\leq$, namely the binary meet operation with respect to $\geq$, as the multiplication. The right extension/right lifting is given by 
	\[
	z\mot y=y\mto z=\begin{cases}
	0 \ \text{ if }y\geq z\\
	z \ \text{ otherwise.}
	\end{cases}
	\]
	This quantale is related to (a generalisation of) ultrametric spaces.
	
	We remark that more generally, any \defemph{locale}, i.e., a complete lattice in which the binary meet operation $\wedge$ satisfies the infinitary distributive law $(\bigvee_{i\in I} y_i)\wedge x=\bigvee_{i\in I}(y_i\wedge x)$, acquires a quantale structure with $\wedge$ as the multiplication;
	indeed quantales were first introduced as a quantum theoretic generalisation of locales \cite{Mulvey_and}.
	The poset $([0,\infty],\geq)$, or more generally any totally ordered complete lattice, is a locale.
\end{example}

\begin{example}
	Let $\cat{M}=(M,e,\cdot)$ be a monoid. The
	\defemph{free quantale generated by $\cat{M}$}
	is $\Pow \cat{M}=(\Pow M,{\subseteq},\{e\}, \cdot)$,
	where $\Pow M$ is the power set of $M$ and the multiplication $\cdot$ on $\Pow 
	M$ is the unique supremum-preserving extension of the original multiplication on $M$, which is given by
	\[
	A\cdot B = \{\,a\cdot b\mid a\in A, b\in B\,\}
	\]
	for all $A,B\in\Pow M$.
	Unlike the previous examples, this quantale is not commutative unless $\cat{M}$ is.
\end{example}

\begin{example}\label{ex:quantale_binary_rel}
	Let $A$ be a set. The poset $(\Pow (A\times A),\subseteq)$
	of all binary relations on $A$ admits a quantale structure
	$(\Pow(A\times A),\subseteq, I_{A},\circ)$, where $I_A$ denotes the diagonal relation on $A$ and $\circ$ denotes composition of relations.
	This quantale is not commutative in general. 
\end{example}

\section{\texorpdfstring{$\cat{Q}$}{Q}-categories}
\label{sec:Q-cat}
In this section, we introduce $\cat{Q}$-categories for a quantale $\cat{Q}$. They are instances of the well-established notion of enriched category \cite{Kelly:enriched}.
\emph{Throughout the rest of this paper,  $\cat{Q}=(Q,\preceq_\cat{Q},\one_\cat{Q},\tms_\cat{Q})$ denotes an arbitrary quantale, unless otherwise specified.}

\begin{definition}
	A \defemph{$\cat{Q}$-category} $\cat{C}$ consists of:
	\begin{description}
		\item[(CD1)] a set $\ob{\cat{C}}$ of \defemph{objects};
		\item[(CD2)] for each $c,c'\in\ob{\cat{C}}$, an element $\cat{C}(c,c')\in {Q}$
	\end{description}
	satisfying the following axioms:
	\begin{description}
		\item[(CA1)] for each $c\in\ob{\cat{C}}$, $\one_\cat{Q} \preceq_\cat{Q} \cat{C}(c,c)$;
		\item[(CA2)] for each $c,c',c''\in\ob{\cat{C}}$, $\cat{C}(c',c'')\tms_\cat{Q} \cat{C}(c,c')\preceq_\cat{Q} 
		\cat{C}(c,c'')$.
	\end{description}
	We also write $c\in\cat{C}$ for $c\in\ob{\cat{C}}$.
\end{definition}

\begin{example}\label{ex:2-cat}
	In the case $\cat{Q}=\TV$,  we may identify the data of a $\TV$-category $\cat{C}=(\ob{\cat{C}}, (\cat{C}(c,c'))_{c,c'\in\ob{\cat{C}}})$  with a set $\ob{\cat{C}}$ equipped with a binary relation $\preceq_\cat{C}$ on it (defined as the set of all pairs $(c,c')\in\ob{\cat{C}}\times\ob{\cat{C}}$ with $\cat{C}(c,c')=\top$).
	Axioms (CA1) and (CA2) for a $\TV$-category then translate to reflexivity and transitivity of $\preceq_\cat{C}$ respectively, hence a $\TV$-category is nothing but a \emph{preordered set}.
\end{example}

\begin{example}\label{ex:nonnegRmin_cat_gms}
	In the case $\cat{Q}=\nonnegRmin$, we may regard $\nonnegRmin$-categories as {generalised metric spaces} \cite{Lawvere_metric}.
	Objects of an $\nonnegRmin$-category $\cat{C}$ are thought of as \emph{points} and the element $\cat{C}(c,c')\in[0,\infty]$ as the \emph{distance from $c$ to $c'$}.
	Notice that the axioms for $\nonnegRmin$-category indeed translate to some of the axioms for metric spaces:
	\begin{description}
		\item[(CA1)] for each $c\in\ob{\cat{C}}$, $0\geq \cat{C}(c,c)$ (that is, $\cat{C}(c,c)=0$); and
		\item[(CA2)] for each $c,c',c''\in\ob{\cat{C}}$, $\cat{C}(c',c'')+ \cat{C}(c,c')\geq \cat{C}(c,c'')$ (the triangle inequality).
	\end{description}
	We call $\nonnegRmin$-categories \defemph{Lawvere metric spaces}.
	Every metric space is a Lawvere metric space, but not conversely. Lawvere metric spaces are more general than metric spaces in the following three aspects:
	\begin{itemize}
		\item distance may take $\infty$;
		\item distance is non-symmetric (or \emph{directed}), i.e., $\cat{C}(c,c')$ may be different from $\cat{C}(c',c)$; and
		\item $\cat{C}(c,c')=\cat{C}(c',c)=0$ does not necessarily imply $c=c'$. 
	\end{itemize}
\end{example}

\begin{example}
	Similarly, $\nonnegRminmax$-categories may be regarded as {generalised ultrametric spaces}; note that axiom (CA2) now reads:
	\begin{description}
		\item[(CA2)] for each $c,c',c''\in\ob{\cat{C}}$, $\max\{\,\cat{C} (c',c''), \cat{C}(c,c') \,\}\geq \cat{C} (c,c'')$. 
	\end{description}
\end{example}

\begin{example}
	Let $\cat{M}=(M,e,\cdot)$ be a monoid. A $\Pow \cat{M}$-category $\cat{C}$ has, for each pair $c,c'\in\ob{\cat{C}}$, a subset $\cat{C}(c,c')\subseteq M$. These subsets must satisfy:
	\begin{description}
		\item[(CA2)] for each $c\in\ob{\cat{C}}$, $e\in \cat{C}(c,c)$; and 
		\item[(CA2)] for each $c,c',c''\in\ob{\cat{C}}$, $n\in \cat{C} (c',c'')$ and $m\in \cat{C}(c,c')$, $n\cdot m\in\cat{C} (c,c'')$. 
	\end{description}
	It follows that a $\Pow \cat{M}$-category can be identified with an ordinary category $\cat{C}$ equipped with a \emph{faithful} functor $\cat{C}\longrightarrow\cat{M}$, where the monoid $\cat{M}$ is regarded as a one-object category.
	
	In fact, this example can be vastly generalised. For any (ordinary) \emph{category} $\cat{B}$, we can construct the free \emph{quantaloid} $\Pow\cat{B}$ over it; quantaloids \cite{Rosenthal_quantaloid_automata} are a many-object version of quantales, just like categories can be seen as a many-object version of monoids.
	It turns out that a $\Pow\cat{B}$-category corresponds to a category $\cat{C}$ equipped with a faithful functor $\cat{C}\longrightarrow\cat{B}$ \cite{Garner_topological}. 
	Theorem \ref{thm:main} is known to generalise to quantaloid-enriched categories \cite{Stubbe_note,Stubbe_double,Shen_Tholen_topological}, and (skeletal and) complete/injective $\Pow\cat{B}$-categories correspond to \emph{topological functors over $\cat{B}$}; see \cite{Garner_topological} for a characterisation of topological functors in terms of completeness, and see \cite{Brummer_Hoffmann_external,Herrlich_initial}  for that in terms of injectivity.
	In order to keep the paper accessible to a wider audience, in this paper we shall not pursue quantaloid-enriched categories any further. On the other hand, although our main examples of base quantales are commutative, we shall not assume commutativity so that our arguments can be easily generalised to the case of quantaloids. 
\end{example}

\medskip

Let $\cat{C}$ be a $\cat{Q}$-category. We define a preorder relation $\preceq_\cat{C}$ on $\cat{C}$ as 
\[
c\preceq_\cat{C}c' \iff I_\cat{Q}\preceq_\cat{Q}\cat{C}(c,c').
\]
(Note that the notation $\preceq_\cat{C}$ agrees with the one introduced in Example \ref{ex:2-cat}.)
Two objects $c,c'\in\cat{C}$ are said to be \defemph{isomorphic} if both $c\preceq_\cat{C} c'$ and $c'\preceq_\cat{C} c$ hold.
Isomorphic objects behave exactly in the same manner: if $c,c'\in\cat{C}$ are isomorphic then for every object $d\in\cat{C}$, we have $\cat{C}(c,d)=\cat{C}(c',d)$ and $\cat{C}(d,c)=\cat{C}(d,c')$.
We call $\cat{C}$ \defemph{skeletal} if isomorphic objects in $\cat{C}$ are equal, i.e., if the induced preorder relation $\preceq_\cat{C}$ on $\ob{\cat{C}}$ is actually a partial order relation.

A skeletal $\TV$-category is a poset, and a skeletal $\nonnegRmin$- or $\nonnegRminmax$-category $\cat{C}$ satisfies the condition that for all $c,c'\in\cat{C}$, $\cat{C}(c,c')=\cat{C}(c',c)=0$ implies $c=c'$.

\medskip

Let us move on to define completeness of $\cat{Q}$-categories.
Given a $\cat{Q}$-category $\cat{C}$, an object $c\in\cat{C}$ and an element $x\in Q$, an object $c'\in\cat{C}$ is said to be a \defemph{power of $c$ by $x$} if for any $d\in\cat{C}$, the equation
\[
\cat{C}(d,c')=x\mto \cat{C}(d,c)
\]
holds \cite[Section~3.7]{Kelly:enriched}.
Powers of $c$ by $x$ may or may not exist in $\cat{C}$, but when they exist they are unique up to isomorphism: if $c'$ is a power of $c$ by $x$, then an object $c''\in\cat{C}$ is also a power of $c$ by $x$ if and only if $c'$ and $c''$ are isomorphic. 
In particular, in a skeletal $\cat{Q}$-category powers are unique. We denote the power of $c$ by $x$ by $x \pitchfork c$.

There is also a dual notion of \defemph{copower of $c\in \cat{C}$ by $x\in Q$}, which is defined as an object $c'\in\cat{C}$ such that for any $d\in\cat{C}$, the equation
\[
\cat{C}(c',d)=\cat{C}(c,d)\mot x
\]
holds. The copower of $c$ by $x$ is denoted by $x\ast c$.

\begin{definition}[{\cite[Section 2]{Stubbe_tensor_cotensor}}]\label{def:completeness}
	A $\cat{Q}$-category $\cat{C}$ is said to be:
	\begin{itemize}
		\item \defemph{powered} if for any $c\in \cat{C}$ and $x\in Q$, the power $x\pitchfork c$ exists in $\cat{C}$; 
		\item \defemph{copowered} if for any $c\in\cat{C}$ and $x\in Q$, the copower $x\ast c$  exists in $\cat{C}$; 
		\item \defemph{order-complete} if the preordered set $(\ob{\cat{C}},\preceq_\cat{C})$ is complete (i.e., if its poset reflection\footnote{The \emph{poset reflection} of a preordered set $(P,\preceq)$ is the quotient of it by the equivalence relation $\preceq\cap\succeq$.} is a complete lattice); and
		\item \defemph{complete} if it is powered, copowered and order-complete. 
	\end{itemize}
\end{definition}

\medskip

Next we define morphisms between $\cat{Q}$-categories, called \emph{$\cat{Q}$-functors}.

\begin{definition}Let $\cat{C}$ and $\cat{D}$ be $\cat{Q}$-categories.
	\begin{enumerate}
		\item A \defemph{$\cat{Q}$-functor} $f\colon \cat{C}\longrightarrow\cat{D}$ is a function $f\colon \ob{\cat{C}}\longrightarrow\ob{\cat{D}}$
		such that for each $c,c'\in\cat{C}$, 
		\begin{equation}\label{eqn:Q-functor_ineq}
		\cat{C}(c,c')\preceq_\cat{Q} \cat{D}(f(c),f(c'))
		\end{equation}
		holds.
		\item A $\cat{Q}$-functor $f\colon\cat{C}\longrightarrow\cat{D}$ is \defemph{fully faithful} if for each $c,c'\in\cat{C}$, (\ref{eqn:Q-functor_ineq}) is satisfied with equality.
		We call fully faithful $\cat{Q}$-functors \defemph{embeddings} for short. 
	\end{enumerate}
\end{definition}
For any $\cat{Q}$-category $\cat{C}$ we have the \defemph{identity $\cat{Q}$-functor} $\id{\cat{C}}\colon\cat{C}\longrightarrow\cat{C}$ (given by the identity function on $\ob{\cat{C}}$), and 
$\cat{Q}$-functors are closed under composition. So $\cat{Q}$-categories and $\cat{Q}$-functors form an (ordinary) category $\enCat{\cat{Q}}$.
Note that an embedding $f\colon\cat{C}\longrightarrow\cat{D}$ of $\cat{Q}$-categories need not be injective as a function $f\colon\ob{\cat{C}}\longrightarrow\ob{\cat{D}}$, though embeddings out of a skeletal $\cat{C}$ {are} injective.

For example, a $\TV$-functor $f\colon \cat{C}\longrightarrow \cat{D}$ is a monotone map, and an $\nonnegRmin$- or $\nonnegRminmax$-functor $f\colon \cat{C}\longrightarrow \cat{D}$ is a nonexpansive map.
Embeddings specialise to order embeddings and isometric embeddings respectively.

We say that a $\cat{Q}$-category $\cat{E}$ is \defemph{injective} (with respect to embeddings) if, whenever we have $\cat{Q}$-categories $\cat{C}$ and $\cat{D}$, a $\cat{Q}$-functor $f\colon \cat{C}\longrightarrow\cat{E}$, and an embedding $i\colon \cat{C}\longrightarrow \cat{D}$, there exists a (not necessarily unique) $\cat{Q}$-functor $g\colon \cat{D}\longrightarrow\cat{E}$ such that $f=g\circ i$.

Thus we have defined all terms appearing in Theorem \ref{thm:main}. In fact, we can already prove the easier direction.

\begin{lemma}\label{lem:complete_injective}
	A skeletal and complete $\cat{Q}$-category is injective.
\end{lemma}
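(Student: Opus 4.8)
The plan is to construct the required $\cat{Q}$-functor $g\colon\cat{D}\longrightarrow\cat{E}$ explicitly, by the pointwise left Kan extension formula. Given a $\cat{Q}$-functor $f\colon\cat{C}\longrightarrow\cat{E}$ and an embedding $i\colon\cat{C}\longrightarrow\cat{D}$, with $\cat{E}$ skeletal and complete, I would set
\[
g(d)=\bigvee_{c\in\cat{C}}\bigl(\cat{D}(i(c),d)\ast f(c)\bigr)\in\ob{\cat{E}}.
\]
This makes sense: each copower $\cat{D}(i(c),d)\ast f(c)$ exists because $\cat{E}$ is copowered, and the supremum exists because $\cat{E}$ is order-complete; skeletality then ensures that copowers and suprema are honest, uniquely determined elements of $\ob{\cat{E}}$, so that $g$ is a well-defined function. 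It remains to show (i) $g$ is a $\cat{Q}$-functor and (ii) $g\circ i=f$.

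For (i) I first record a preliminary fact: in any powered and order-complete (hence in any complete) $\cat{Q}$-category, suprema of the underlying poset compute enriched colimits, in the sense that $\cat{E}(\bigvee_j e_j,e)=\bigwedge_j\cat{E}(e_j,e)$ for every family $(e_j)_j$ and every $e$. One inequality is immediate from $e_j\preceq_\cat{E}\bigvee_j e_j$ and (CA2); the reverse uses a short Yoneda-style argument with the power $q\pitchfork e$, where $q=\bigwedge_j\cat{E}(e_j,e)$, to see that $\bigvee_j e_j\preceq_\cat{E}q\pitchfork e$ and hence $q\preceq_\cat{Q}\cat{E}(\bigvee_j e_j,e)$. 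Granting this, $\cat{E}(g(d),g(d'))$ unfolds, via this fact and the defining property of copowers, to $\bigwedge_{c}\bigl(\cat{E}(f(c),g(d'))\mot\cat{D}(i(c),d)\bigr)$; so the inequality $\cat{D}(d,d')\preceq_\cat{Q}\cat{E}(g(d),g(d'))$ amounts, by the residuation adjunctions (\ref{eqn:residual_adjointness}) and composition in $\cat{D}$, to $\cat{D}(i(c),d')\preceq_\cat{Q}\cat{E}(f(c),g(d'))$ for each $c$. Each of these follows from the fact that $\cat{D}(i(c),d')\ast f(c)$ is a summand of the supremum defining $g(d')$, together with the canonical estimate $x\preceq_\cat{Q}\cat{E}(e,x\ast e)$ valid for any copower.

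For (ii), full faithfulness of $i$ gives $\cat{D}(i(c),i(c_0))=\cat{C}(c,c_0)$, so $g(i(c_0))=\bigvee_{c}\bigl(\cat{C}(c,c_0)\ast f(c)\bigr)$. The inequality $f(c_0)\preceq_\cat{E}g(i(c_0))$ comes from the $c=c_0$ summand, using $\one_\cat{Q}\preceq_\cat{Q}\cat{C}(c_0,c_0)$, monotonicity of copowers in the scalar, and $\one_\cat{Q}\ast f(c_0)\cong f(c_0)$. The reverse inequality $g(i(c_0))\preceq_\cat{E}f(c_0)$ holds summand by summand: $\cat{C}(c,c_0)\ast f(c)\preceq_\cat{E}f(c_0)$ is equivalent, again by (\ref{eqn:residual_adjointness}), to $\cat{C}(c,c_0)\preceq_\cat{Q}\cat{E}(f(c),f(c_0))$, which is exactly the $\cat{Q}$-functoriality of $f$. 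Thus $g(i(c_0))$ and $f(c_0)$ are isomorphic in $\cat{E}$, and skeletality of $\cat{E}$ upgrades this to $g(i(c_0))=f(c_0)$.

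The main obstacle is the preliminary fact in (i): without powers it genuinely fails, since a supremum in the underlying poset of a $\cat{Q}$-category is in general far coarser than an enriched colimit, so the argument really needs all three components of completeness to cooperate. Everything else is routine manipulation of the three residuation adjunctions. (Alternatively, one could run the whole argument at the level of weighted colimits, invoking that a complete $\cat{Q}$-category is cocomplete and that a left Kan extension along a fully faithful $\cat{Q}$-functor restricts back to the original functor; the displayed formula is just an unpacking of $\Lan_i f$.)
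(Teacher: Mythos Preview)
Your proposal is correct and is essentially the same argument as the paper's: both define $g$ as the pointwise left Kan extension $g(d)=\bigvee_{c}\cat{D}(i(c),d)\ast f(c)$ and then use full faithfulness of $i$ together with skeletality of $\cat{E}$ to conclude $g\circ i=f$. The only difference is that the paper outsources the verification that $g$ is a $\cat{Q}$-functor and that $g\circ i\cong f$ to \cite[Proposition 6.7]{Stubbe_quantaloid_dist}, whereas you unpack these details explicitly (including the ``preliminary fact'' that poset suprema in a powered $\cat{Q}$-category compute enriched conical colimits), making your version self-contained.
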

\begin{proof}
	Let $\cat{E}$ be a skeletal and complete $\cat{Q}$-category. Given a diagram as in 
	\begin{equation*}
	\begin{tikzpicture}[baseline=-\the\dimexpr\fontdimen22\textfont2\relax ]
	\node (TL) at (0,1) {$\cat{C}$};
	\node (BL) at (0,-1) {$\cat{D}$};
	\node (TR) at (2,1) {$\cat{E}$};
	
	\draw [->] (TL) to node [auto,labelsize] {$f$} (TR);
	\draw [->] (TL) to node [auto,labelsize,swap] {$i$} (BL);
	\end{tikzpicture}
	\end{equation*}
	we may define a $\cat{Q}$-functor $g\colon \cat{D}\longrightarrow\cat{E}$ as the (pointwise) left (or right) Kan extension of $f$ along $i$, namely
	\begin{align*}
	g(d) &= \Lan_i f(d)=\bigvee_{c\in\cat{C}}\cat{D}(i(c), d)\ast f(c)\\
	(\text{or }g(d)&=\Ran_i f(d)=\bigwedge_{c\in\cat{C}} \cat{D}(d,i(c))\pitchfork f(c)).
	\end{align*}
	Then, provided that $i$ is an embedding, $f(c)$ and $(g\circ i)(c)$ are isomorphic for all $c\in\cat{C}$ \cite[Proposition 6.7]{Stubbe_quantaloid_dist}; but since $\cat{E}$ is skeletal, isomorphic objects are necessarily equal, so $f=g\circ i$.
	(Incidentally, $\Lan_i f$ and $\Ran_i f$ are respectively the least and greatest $g$ such that $f=g\circ i$; that is, a $\cat{Q}$-functor $g\colon \cat{D}\longrightarrow\cat{E}$ satisfies $f=g\circ i$ if and only if $\Lan_i f\preceq g\preceq \Ran_i f$ with respect to the pointwise order $\preceq$ induced from $\preceq_\cat{E}$.)
\end{proof}

\section{The MacNeille completion of a \texorpdfstring{$\cat{Q}$}{Q}-category}
\label{sec:Isbell}
In this section we explain the MacNeille completion of a $\cat{Q}$-category. 
We start with some preparation.
\begin{definition}[{Cf.~\cite[Proposition 6.1]{Stubbe_quantaloid_dist}}]
	Let $\cat{C}$ be a $\cat{Q}$-category. 
	The $\cat{Q}$-category $\psh\cat{C}$ of presheaves over $\cat{C}$ is defined as follows.
	\begin{itemize}
		\item An object is a \defemph{presheaf over $\cat{C}$}, that is a family $P=(Pc)_{c\in\cat{C}}$ of elements of ${Q}$ satisfying the inequality $Pc'\tms \cat{C}(c,c')\preceq_\cat{Q}Pc$ for each $c,c'\in\cat{C}$.
		\item The element $\psh\cat{C}(P,P')$ of ${Q}$ is given by $\bigwedge_{c\in\cat{C}}P'c\mot Pc$.
	\end{itemize}
	
	Dually, the $\cat{Q}$-category $\copsh\cat{C}$ of copresheaves over $\cat{C}$ is defined as follows.
	\begin{itemize}
		\item An object is a \defemph{copresheaf over $\cat{C}$}, that is a family $R=(Rc)_{c\in\cat{C}}$ of elements of ${Q}$ satisfying the inequality $\cat{C}(c,c')\tms Rc\preceq_\cat{Q} Rc'$ for each $c,c'\in\cat{C}$.
		\item The element $\copsh\cat{C}(R,R')$ of ${Q}$ is given by $\bigwedge_{c\in\cat{C}}R'c\mto Rc$.
	\end{itemize}
\end{definition}

For any $\cat{Q}$-category $\cat{C}$, there are well-known embeddings $y_\cat{C}\colon\cat{C}\longrightarrow\psh\cat{C}$ and $y^\dagger_\cat{C}\colon\cat{C}\longrightarrow\copsh\cat{C}$ called the \defemph{Yoneda} and \defemph{co-Yoneda embeddings} respectively; they are defined as $y_\cat{C}(c)=(\cat{C}(c',c))_{c'\in\cat{C}}$ and $y^\dagger_\cat{C}(c)=(\cat{C}(c,c'))_{c'\in\cat{C}}$.

For example, when $\cat{Q}=\TV$, $\psh\cat{C}$ can be understood as the poset of all \emph{lower sets} of $\cat{C}$ (i.e., subsets $P\subseteq \ob{\cat{C}}$ such that $c'\in P$ and $c\preceq_\cat{C} c'$ imply $c\in P$), ordered by inclusion. The Yoneda embedding maps an element $c\in\cat{C}$ to the principal lower set $\downarrow\! c=\{\,c'\in\cat{C}\mid c'\preceq_\cat{C} c\,\}$ generated by it. Dually, $\copsh\cat{C}$ is the poset of all \emph{upper sets} of $\cat{C}$ ordered by the \emph{opposite} of inclusion, and the co-Yoneda embedding maps $c\in\cat{C}$ to the principal upper set $\uparrow\! c$.

There exists a pair of canonical $\cat{Q}$-functors 
\begin{equation}\label{eqn:Isbell_adj}
\cat{C}\mot (-)\colon \psh\cat{C}\longrightarrow\copsh\cat{C}\quad  \text{ and }\quad (-)\mto\cat{C}\colon \copsh\cat{C}\longrightarrow\psh\cat{C}.
\end{equation}
The functor $\cat{C}\mot (-)\colon\psh\cat{C}\longrightarrow\copsh\cat{C}$ maps a presheaf $P=(Pc)_{c\in\cat{C}}$ to the copresheaf $\cat{C}\mot P=((\cat{C}\mot P)c)_{c\in\cat{C}}$ defined as 
\[
(\cat{C}\mot P)c=\bigwedge_{c'\in\cat{C}}\cat{C}(c',c)\mot Pc';
\]
the functor $(-)\mto \cat{C}$ maps $R\in\copsh\cat{C}$ to $R\mto \cat{C}\in\psh\cat{C}$ defined as 
\[
(R\mto \cat{C})c=\bigwedge_{c'\in\cat{C}}Rc'\mto \cat{C}(c,c').
\]
The functors \eqref{eqn:Isbell_adj} form a \emph{$\cat{Q}$-adjunction} $\cat{C}\mot (-)\dashv (-)\mto \cat{C}$, in the sense that $\psh\cat{C}(P,R\mto \cat{C})=\copsh\cat{C}(\cat{C}\mot P, R)$ for all $P\in\psh\cat{C}$ and $R\in\copsh\cat{C}$. This can be checked as follows: 
\begin{align*}
\psh\cat{C}(P,R\mto \cat{C})&=\bigwedge_{c,c'\in\cat{C}}\Big(Rc'\mto \cat{C}(c,c')\Big)\mot Pc\\
&=\bigwedge_{c,c'\in\cat{C}}Rc'\mto \Big(\cat{C}(c,c')\mot Pc\Big)\\
&=\copsh\cat{C}(\cat{C}\mot P,R).\\
\end{align*}
This adjunction is called the \defemph{Isbell adjunction} \cite{Shen_Zhang_Isbell_Kan,Garner_topological}.

The \defemph{MacNeille completion}  \cite{Garner_topological,Shen_thesis} $\Isb\cat{C}$ of $\cat{C}$ is the $\cat{Q}$-category defined as follows.
	\begin{description}
	\item[(CD1)] An object is a pair $(P,R)$ of a presheaf $P\in\psh\cat{C}$ and a copresheaf $R\in\copsh\cat{C}$ such that $P=R\mto \cat{C}$ and $R=\cat{C}\mot P$ hold.
	\item[(CD2)] Given two objects $(P,R)$ and $(P',R')$, the element \[\Isb\cat{C}((P,R),(P',R'))\in Q\] is defined as 
	$
	\psh\cat{C}(P,P')
	$,
	or equivalently as 
	$
	\copsh\cat{C}(R,R');
	$
	indeed, we have 
	\begin{equation*}
	\psh\cat{C}(P,P')=\psh\cat{C}(P,R'\mto \cat{C})
	=\copsh\cat{C}(\cat{C}\mot P,R')
	=\copsh\cat{C}(R,R').
	\end{equation*}
\end{description}
	Hence we have natural embeddings $p_\cat{C}\colon\Isb\cat{C}\longrightarrow\psh\cat{C}$ and $p^\dagger_\cat{C}\colon\Isb\cat{C}\longrightarrow\copsh\cat{C}$ defined by projections.
	The Yoneda (resp.~co-Yoneda) embedding factors through $p_\cat{C}$ (resp.~$p^\dagger_\cat{C}$), so we have a canonical embedding
	$i_\cat{C}\colon\cat{C}\longrightarrow\Isb\cat{C}$ which maps each $c\in\cat{C}$ to $(\cat{C}(-,c),\cat{C}(c,-))\in\Isb\cat{C}$.
	We summarise the situation in the diagram below.
	\[
	\begin{tikzpicture}[baseline=-\the\dimexpr\fontdimen22\textfont2\relax ]
	\node (L) at (3,1.5) {$\psh\cat{C}$};
	\node (R) at (3,-1.5) {$\copsh\cat{C}$};
	\node (M) at (0,0) {$\Isb\cat{C}$};
	\node (B) at (-3,0) {$\cat{C}$};
	
	\draw [->, transform canvas={xshift=-6}] (L) to node [auto,labelsize,swap] {$\cat{C}\mot (-)$} (R);
	\draw [<-, transform canvas={xshift=6}] (L) to node [auto,labelsize] {$(-)\mto \cat{C}$} (R);
	\draw [>->] (M) to node [auto,labelsize] {$p_\cat{C}$} (L);
	\draw [>->] (M) to node [auto,swap,labelsize] {$p^\dagger_\cat{C}$} (R);
	\draw [>->,bend left=15] (B) to node [auto,labelsize] {$y_\cat{C}$} (L);
	\draw [>->,bend right=15] (B) to node [auto,swap,labelsize] {$y^\dagger_\cat{C}$} (R);
	\draw [>->] (B) to node [auto,labelsize] {$i_\cat{C}$} (M);
	
	\path(L) to node {$\dashv$} (R);
	\end{tikzpicture}
	\]	

\begin{proposition}[\cite{Shen_thesis,Garner_topological}]\label{prop:Isb_skeletal_complete}
	Let $\cat{C}$ be a $\cat{Q}$-category. The MacNeille completion $\Isb\cat{C}$ is skeletal and complete.
\end{proposition}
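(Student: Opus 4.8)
The plan is to exhibit $\Isb\cat{C}$ simultaneously as a reflective full sub-$\cat{Q}$-category of $\psh\cat{C}$ (via $p_\cat{C}$) and as a coreflective full sub-$\cat{Q}$-category of $\copsh\cat{C}$ (via $p^\dagger_\cat{C}$), and then to transfer skeletality and completeness from $\psh\cat{C}$ and $\copsh\cat{C}$. So the first step is to recall that $\psh\cat{C}$ and $\copsh\cat{C}$ are themselves skeletal and complete, with all structure computed pointwise: the induced order on $\psh\cat{C}$ is the pointwise order $Pc\preceq P'c$ for all $c$ (whence skeletality), pointwise meets of presheaves are again presheaves (using monotonicity of $\tms$), and the formulas $(x\pitchfork P)c=x\mto Pc$ and $(x\ast P)c=x\tms Pc$ give powers and copowers, the presheaf inequalities and universal properties following from associativity of $\tms$ and routine residuation manipulations; $\copsh\cat{C}$ is handled dually. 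Since $\cat{Q}$ is not assumed commutative I would include these short verifications.

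Next I would record two consequences of the Isbell $\cat{Q}$-adjunction $F:=\cat{C}\mot(-)\dashv G:=(-)\mto\cat{C}$ together with skeletality of $\psh\cat{C}$ and $\copsh\cat{C}$: first, the triangle identities force the equalities $GFG=G$ and $FGF=F$, so that $T:=GF$ is a closure operator on $\psh\cat{C}$ (monotone, $P\preceq TP$, $TT=T$) and $S:=FG$ an interior operator on $\copsh\cat{C}$; second, $G$ preserves powers and $F$ preserves copowers (two applications of the adjunction isomorphism and the definitions). Unwinding the definition of $\Isb\cat{C}$, the assignment $(P,R)\mapsto P$ identifies its objects with the $T$-closed presheaves $\{P:TP=P\}$, and since $\Isb\cat{C}((P,R),(P',R'))=\psh\cat{C}(P,P')$, the functor $p_\cat{C}$ presents $\Isb\cat{C}$ as the full sub-$\cat{Q}$-category of $\psh\cat{C}$ on the $T$-closed presheaves; dually $p^\dagger_\cat{C}$ presents it as the full sub-$\cat{Q}$-category of $\copsh\cat{C}$ on the $S$-closed copresheaves. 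Skeletality of $\Isb\cat{C}$ is now immediate, a full sub-$\cat{Q}$-category of a skeletal one being skeletal.

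For completeness it then suffices to construct the required (co)limits of $\Isb\cat{C}$ as the pointwise (co)limits of $\psh\cat{C}$ resp.\ $\copsh\cat{C}$, checking they remain closed; fullness of $p_\cat{C}$ and $p^\dagger_\cat{C}$ guarantees that the universal properties transfer. Closure under meets: if each $P_i$ is $T$-closed then $T(\bigwedge_i P_i)\preceq TP_j=P_j$ for all $j$, so $T(\bigwedge_i P_i)\preceq\bigwedge_i P_i$, and the reverse inequality is the unit; skeletality gives equality. Hence $\Isb\cat{C}$ has all meets (including a top element), so is order-complete. Closure under powers: writing a $T$-closed $P$ as $P=GR$ with $R=FP$, the fact that $G$ preserves powers gives $x\pitchfork P=G(x\pitchfork R)$, and therefore $T(x\pitchfork P)=GFG(x\pitchfork R)=G(x\pitchfork R)=x\pitchfork P$; the argument for copowers of $S$-closed copresheaves is dual, using that $F$ preserves copowers and $FGF=F$. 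Thus $\Isb\cat{C}$ is powered and copowered, hence complete.

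The conceptual content is the classical fact that a reflective subcategory inherits limits and a coreflective one inherits colimits, so I do not expect a deep obstacle; the point that makes the argument work is precisely that $\Isb\cat{C}$ is reflective in $\psh\cat{C}$ and coreflective in $\copsh\cat{C}$, so powers and meets must be read off on the presheaf side and copowers on the copresheaf side --- the two sides are genuinely different because $\cat{Q}$ need not be commutative, so there is no symmetry shortcut. The only real care required is keeping the left/right variances straight in the residuation computations: the presheaf and copresheaf conditions, the pointwise (co)power formulas, and the verification that $G$ preserves powers while $F$ preserves copowers.
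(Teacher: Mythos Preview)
Your argument is correct and shares its core with the paper's: both exhibit $\Isb\cat{C}$ as a full reflective sub-$\cat{Q}$-category of the skeletal and complete $\psh\cat{C}$ via $p_\cat{C}$ (the reflector being $P\mapsto(GFP,FP)$), and transfer skeletality and completeness from there. The paper's proof is essentially that one sentence, deferring to the general fact that a full reflective sub-$\cat{Q}$-category of a complete $\cat{Q}$-category is again complete; in particular it never invokes the coreflective embedding into $\copsh\cat{C}$.

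Your claim that ``powers and meets must be read off on the presheaf side and copowers on the copresheaf side'' is therefore a slight overcomplication rather than a necessity: in any full reflective sub-$\cat{Q}$-category the copower $x\ast_{\Isb\cat{C}}(P,R)$ exists as the reflection of the ambient copower $x\ast_{\psh\cat{C}}P$, so reflectivity in $\psh\cat{C}$ alone already yields powers, copowers and order-completeness. What your two-sided treatment buys is an explicit pointwise description of copowers on the copresheaf component, at the cost of a little extra bookkeeping; the paper trades that explicitness for brevity.
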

\begin{proof}
	This is an immediate consequence of the fact that $\psh\cat{C}$ is skeletal and complete, and that $p_\cat{C}$ has a left adjoint, thus making $\MN\cat{C}$ a full reflective subcategory of $\psh\cat{C}$. See e.g., \cite[Proposition 7.6 (a)]{Garner_topological}.
\end{proof}

\section{A formal theory of injective envelopes}
\label{sec:formal_theory}

In this section, we recall the notion of injective envelope and its basic properties \cite{Adamek_et_al_inj}.
Throughout this section, let $\cat{X}$ be an (ordinary) category and $\cat{H}$ be a class of morphisms in $\cat{X}$, whose elements are called \defemph{embeddings}. We make no assumptions on $\cat{X}$ and $\cat{H}$, unless otherwise specified.
An example to bear in mind is the case where $\cat{X}=\enCat{\cat{Q}}$ and $\cat{H}$ is the class of all fully faithful $\cat{Q}$-functors.

\begin{definition}[{\cite[Definitions 2.1]{Adamek_et_al_inj}}]\label{def:inj_env}
	\begin{enumerate}
		\item An object $E$ of $\cat{X}$ is \defemph{injective} if, whenever we have objects $C$ and $D$, a morphism $f\colon C\longrightarrow D$, and an embedding $i\colon C\longrightarrow D$, there exists a (not necessarily unique) morphism $g\colon D\longrightarrow E$ such that $f=g\circ i$.
		\item A morphism $f\colon C\longrightarrow D$ in $\cat{X}$
		is called an \defemph{essential embedding} if: (i)~$f$ is an embedding, and (ii)~for any object $E$ and morphism $g\colon D\longrightarrow E$, if $g\circ f$ is an embedding then so is $g$.
		\item An \defemph{injective envelope} of an object ${C}$ of $\cat{X}$ is a pair $(D,f)$ consisting of an injective object $D$ and an essential embedding $f\colon {C}\longrightarrow {D}$. 
	\end{enumerate}
\end{definition}

Injective envelopes of an object are unique up to isomorphisms.\footnote{However, note that the nature of this ``uniqueness'' is quite different from that for usual categorical notions determined by their universal properties. We also remark that the operation of taking the injective envelopes of objects does not easily extend to morphisms \cite{Adamek_et_al_inj}; cf.~\cite{Shen_Zhang_Isbell_Kan} and \cite[Section 7]{Garner_topological}.}
\begin{lemma}[{\cite[Remarks 2.2 (2)]{Adamek_et_al_inj}}]\label{lem:inj_hull_uniqueness}
	Let $C$ be an object of $\cat{X}$, and $({D},f)$ and $({D'},f')$ be injective envelopes of ${C}$. Then there exists an isomorphism $g\colon {D}\longrightarrow {D'}$ such that $g\circ f=f'$.
\end{lemma}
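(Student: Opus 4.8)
The plan is to build the isomorphism $g\colon D\longrightarrow D'$ directly by extending $f'$ along $f$ using injectivity, and then to upgrade it from a split monomorphism to a genuine isomorphism by a short diagram chase that appeals only to the bare Definition~\ref{def:inj_env} — crucially, without assuming any closure properties of the class $\cat{H}$ (neither closure under composition nor that identities are embeddings).

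First I would use that $D'$ is injective, applied to the embedding $f\colon C\longrightarrow D$ and the morphism $f'\colon C\longrightarrow D'$, to get $g\colon D\longrightarrow D'$ with $g\circ f=f'$. Since $g\circ f=f'$ is an embedding (as $f'$ is an essential embedding, hence an embedding) and $f$ is an essential embedding, clause (ii) in the definition of essential embedding forces $g$ itself to be an embedding. Next I would apply injectivity of $D$ to this embedding $g\colon D\longrightarrow D'$ and the morphism $\id{D}\colon D\longrightarrow D$, obtaining $g'\colon D'\longrightarrow D$ with $g'\circ g=\id{D}$; thus $g$ is a split monomorphism.

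It remains to show $g\circ g'=\id{D'}$. The key points are that $g\circ g'$ is idempotent, since $(g g')(g g')=g(g'g)g'=g g'$, and that it fixes $f'$, since $g g'\circ f'=g g' g f=g(g'g)f=g f=f'$. Because $f'$ is an essential embedding and $(g g')\circ f'=f'$ is an embedding, clause (ii) again yields that $g g'$ is an embedding; feeding this embedding together with $\id{D'}$ into the injectivity of $D'$ produces a left inverse of $g g'$, so $g g'$ is a monomorphism. An idempotent monomorphism is an identity — from $(g g')(g g')=(g g')\circ\id{D'}$ and left cancellation we get $g g'=\id{D'}$. Hence $g'$ is a two-sided inverse of $g$, so $g$ is an isomorphism, and $g\circ f=f'$ by construction, as required. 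Uniqueness of injective envelopes up to isomorphism follows.

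The step I expect to be the main obstacle is the last one: extracting an honest isomorphism rather than merely a split mono. The naive move of composing two mutual extensions $D\to D'\to D$ only gives an endomorphism of $D$ that fixes $f$, and without closure of $\cat{H}$ under composition one cannot iterate this to squeeze out an isomorphism. Routing the argument through the genuine retraction $g'$ supplied by injectivity — so that $g g'$ is \emph{literally} idempotent and not merely conjugate to something idempotent — is precisely what makes the elementary principle ``idempotent $+$ monomorphism $\Rightarrow$ identity'' applicable.
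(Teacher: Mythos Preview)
Your proof is correct and follows essentially the same approach as the paper: obtain $g$ from injectivity of $D'$, use essentiality of $f$ to make $g$ an embedding, then obtain a retraction $g'$ from injectivity of $D$. The only cosmetic difference is in the endgame: the paper observes that $g'\circ f'=f$, so $g'$ is itself a morphism between injective envelopes, and \emph{iterates} the same argument to show $g'$ is a split monomorphism (hence an isomorphism, being already a split epimorphism); you instead work with the idempotent $gg'$, show it is an embedding via essentiality of $f'$, hence a monomorphism, hence the identity. Both routes invoke exactly the same ingredients (essentiality of $f'$ and injectivity of $D'$) at the same point, so this is a minor repackaging rather than a genuinely different argument.
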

\begin{proof}
	By the injectivity of ${D'}$, we obtain a morphism $g$ as in the following commutative diagram.
	\begin{equation*}
	\begin{tikzpicture}[baseline=-\the\dimexpr\fontdimen22\textfont2\relax ]
	\node (TL) at (0,1) {${C}$};
	\node (BL) at (0,-1) {${D}$};
	\node (TR) at (2,1) {${D'}$};
	
	\draw [>->] (TL) to node [auto,labelsize] {$f'$} (TR);
	\draw [>->] (TL) to node [auto,labelsize,swap] {$f$} (BL);
	\draw [->,dashed] (BL) to node [auto,labelsize,swap] {$g$} (TR);
	\end{tikzpicture}
	\end{equation*}
	We claim that {\emph{any} morphism $g$ between injective envelopes as above (i.e., commuting with the essential embeddings) is an isomorphism.}
	Since $f$ is an essential embedding and $f'=g\circ f$ is an embedding, it follows that $g$ is also an embedding.
	Using the injectivity of ${D}$, we obtain a morphism $h$ as below.
	\[
	\begin{tikzpicture}[baseline=-\the\dimexpr\fontdimen22\textfont2\relax ]
	\node (TL) at (0,1) {${D}$};
	\node (BL) at (0,-1) {${D'}$};
	\node (TR) at (2,1) {${D}$};
	
	\draw [->] (TL) to node [auto,labelsize] {$\id{{D}}$} (TR);
	\draw [>->] (TL) to node [auto,labelsize,swap] {$g$} (BL);
	\draw [->,dashed] (BL) to node [auto,labelsize,swap] {$h$} (TR);
	\end{tikzpicture}
	\]
	So $g$ is a section (split monomorphism) whereas $h$ is a retraction (split epimorphism).
	Precomposing $f$ with the above diagram, we obtain the following.
	\[
	\begin{tikzpicture}[baseline=-\the\dimexpr\fontdimen22\textfont2\relax ]
	\node (TL) at (0,1) {${C}$};
	\node (BL) at (0,-1) {${D'}$};
	\node (TR) at (2,1) {${D}$};
	
	\draw [>->] (TL) to node [auto,labelsize] {$f$} (TR);
	\draw [>->] (TL) to node [auto,labelsize,swap] {$f'$} (BL);
	\draw [->,dashed] (BL) to node [auto,labelsize,swap] {$h$} (TR);
	\end{tikzpicture}
	\]
	So $h$ is also a morphism between injective envelopes. Iterating the same argument as above, we see that $h$ is a section; hence $h$ is an isomorphism and so is its section, $g$.
\end{proof}

\begin{corollary}\label{cor:inj_env_iso}
	Suppose that the class $\cat{H}$ of embeddings contains all identity morphisms of $\cat{X}$.
	Let $C$ be an object of $\cat{X}$ and $(D,f)$ be an injective envelope of $C$. Then $C$ is injective if and only if $f$ is an isomorphism.
\end{corollary}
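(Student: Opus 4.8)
The plan is to prove the two implications separately. The backward implication (if $f$ is an isomorphism then $C$ is injective) is a routine transport of injectivity along an isomorphism; the forward implication (if $C$ is injective then $f$ is an isomorphism) will be deduced from the uniqueness of injective envelopes, Lemma~\ref{lem:inj_hull_uniqueness}, once one recognises that an injective object serves as its own injective envelope.

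For the backward implication I would argue as follows. Since $(D,f)$ is an injective envelope, $D$ is injective by Definition~\ref{def:inj_env}. Given a lifting problem against $C$ --- objects $A,B$, a morphism $u\colon A\longrightarrow C$, and an embedding $m\colon A\longrightarrow B$ --- I postcompose with $f$ to obtain the morphism $f\circ u\colon A\longrightarrow D$; by injectivity of $D$ there is $w\colon B\longrightarrow D$ with $w\circ m=f\circ u$; then I set $g=f^{-1}\circ w\colon B\longrightarrow C$ and check $g\circ m=f^{-1}\circ w\circ m=f^{-1}\circ f\circ u=u$. This needs no assumption on $\cat{H}$.

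For the forward implication, assume $C$ is injective. First I would observe that $(C,\id{C})$ is an injective envelope of $C$: the identity $\id{C}$ is an embedding precisely because $\cat{H}$ is assumed to contain all identity morphisms, and it is an \emph{essential} embedding because condition~(ii) of Definition~\ref{def:inj_env} holds vacuously --- for every $g\colon C\longrightarrow E$ we have $g\circ\id{C}=g$, so $g\circ\id{C}$ being an embedding trivially forces $g$ to be one. Then, applying Lemma~\ref{lem:inj_hull_uniqueness} to the two injective envelopes $(C,\id{C})$ and $(D,f)$ of $C$, I obtain an isomorphism $\varphi\colon C\longrightarrow D$ with $\varphi\circ\id{C}=f$, that is $\varphi=f$; hence $f$ is an isomorphism.

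There is essentially no technical obstacle here: the argument is almost pure bookkeeping. The one point genuinely worth attention --- and the conceptual heart of the statement --- is the identification of $(C,\id{C})$ as an injective envelope, and this is exactly where the standing hypothesis that $\cat{H}$ contains all identities is used, and the only place it is needed.
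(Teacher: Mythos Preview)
Your proof is correct and follows essentially the same approach as the paper: for the forward direction you recognise $(C,\id{C})$ as an injective envelope and invoke Lemma~\ref{lem:inj_hull_uniqueness}, exactly as the paper does; for the backward direction the paper merely remarks that injectivity is closed under isomorphism, whereas you spell out the transport explicitly, but this is the same content.
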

\begin{proof}
	If $C$ is injective, then by the assumption, $(C,\id{C})$ is an injective envelope of $C$. So by Lemma~\ref{lem:inj_hull_uniqueness} there exists an isomorphism $g\colon C\longrightarrow D$ such that $g\circ \id{C}=f$. Hence $g=f$ and $f$ is an isomorphism.
	
	Conversely, the class of all injective objects is clearly closed under isomorphism.
\end{proof}

The injective envelope is defined by the complementary properties of \emph{essentialness} of the embedding and \emph{injectivity} of the codomain. In fact it is ``extremal'' with respect to these two properties, in the following sense.

\begin{proposition}[{Cf.~\cite[Proposition 2]{Banaschewski_Bruns}}]\label{prop:inj_env_property}
	Let ${C}$ be an object of $\cat{X}$ and $(E,g)$ be its injective envelope.
	\begin{enumerate}
		\item For any \emph{essential} embedding $f\colon {C}\longrightarrow {D}$, there exists a (not necessarily unique) embedding $i\colon {D}\longrightarrow E$ with $i\circ f=g$.
		\item For any embedding $h\colon C\longrightarrow{F}$ into an \emph{injective} ${F}$, there exists a (not necessarily unique) embedding $k\colon E\longrightarrow{F}$ with $k\circ g=h$.
	\end{enumerate}
	\[
	\begin{tikzpicture}[baseline=-\the\dimexpr\fontdimen22\textfont2\relax ]
	\node (TL) at (0,1) {${C}$};
	\node (BLL) at (-2,-1) {$D$};
	\node (BL) at (0,-1) {$E$};
	\node (TR) at (2,-1) {$F$};
	
	\draw [>->] (TL) to node [auto, labelsize,swap] {$f$} (BLL);
	\draw [>->] (TL) to node [auto,labelsize] {$h$} (TR);
	\draw [>->] (TL) to node [auto,labelsize] {$g$} (BL);
	\draw [>->,dashed] (BL) to node [auto,labelsize,swap] {$k$} (TR);
	\draw[>->,dashed] (BLL) to node [auto,labelsize,swap] {$i$} (BL);
	\end{tikzpicture}
	\]
\end{proposition}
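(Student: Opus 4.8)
The plan is to derive both statements directly from the defining properties of an injective envelope, obtaining the required maps by a single use of injectivity and then promoting them to embeddings via the essentialness of one of the embeddings in play.

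For part (1), I would first observe that an essential embedding is in particular an embedding, so $f\colon C\longrightarrow D$ qualifies as an embedding. Since $E$ is injective, applying Definition~\ref{def:inj_env}(1) to the morphism $g\colon C\longrightarrow E$ and the embedding $f\colon C\longrightarrow D$ produces a morphism $i\colon D\longrightarrow E$ with $i\circ f=g$. It then remains only to check that $i$ is itself an embedding, and this is exactly clause (ii) in the definition of essential embedding (Definition~\ref{def:inj_env}(2)) applied to $f$: since $i\circ f=g$ is an embedding and $f$ is essential, $i$ is an embedding.

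Part (2) is symmetric, with the roles of the two embeddings interchanged. Here $g\colon C\longrightarrow E$ is an essential, hence ordinary, embedding and $F$ is injective by hypothesis, so Definition~\ref{def:inj_env}(1) applied to the morphism $h\colon C\longrightarrow F$ and the embedding $g$ yields a morphism $k\colon E\longrightarrow F$ with $k\circ g=h$; and since $k\circ g=h$ is an embedding and $g$ is essential, $k$ is an embedding.

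I do not expect a genuine obstacle: the argument is pure bookkeeping with Definition~\ref{def:inj_env}. The one point requiring care is keeping track of which embedding plays the role of ``the essential one'' in each part --- in (1) it is $f$ that we extend along and whose essentialness we invoke, whereas in (2) it is $g$ --- together with the harmless observation in (2) that $g$, being an essential embedding, is in particular an embedding, so that the hypotheses of injectivity of $F$ are met.
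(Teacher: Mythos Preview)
Your proof is correct and is the standard argument. Note that the paper itself states this proposition without proof, simply citing \cite[Proposition~2]{Banaschewski_Bruns}; your write-up supplies precisely the expected bookkeeping from Definition~\ref{def:inj_env}.
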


\section{The MacNeille completion is the injective envelope}\label{sec:complete_iff_injective}
In this section we prove Theorem \ref{thm:main_envelope} and, using that, Theorem \ref{thm:main}.
Whenever we use the notions introduced in the previous section, we take $\cat{X}=\enCat{\cat{Q}}$ and $\cat{H}$ to be  the class of all fully faithful $\cat{Q}$-functors.

The key step is to give an intrinsic characterisation of essential embeddings.
For each $\cat{Q}$-functor $f\colon \cat{C}\longrightarrow\cat{D}$, we have $\cat{Q}$-functors $f^\ast\colon \cat{D}\longrightarrow\psh\cat{C}$ and $f_\ast \colon \cat{D}\longrightarrow\copsh\cat{C}$,
defined as 
$f^\ast (d)= (\cat{D}(f(c), d))_{c\in\cat{C}}$
and $f_\ast (d) =(\cat{D}(d,f(c)))_{c\in\cat{C}}$
respectively. 
We call $f$ \defemph{dense} if $f^\ast$ is an embedding, and \defemph{codense} if $f_\ast$ is an embedding \cite[Chapter 5]{Kelly:enriched}.

For any $\cat{Q}$-category $\cat{C}$, the Yoneda embedding $y_\cat{C}\colon\cat{C}\longrightarrow\psh\cat{C}$ is $(\id{\cat{C}})^\ast$, whereas the co-Yoneda embedding $y^\dagger_\cat{C}\colon\cat{C}\longrightarrow\copsh\cat{C}$ is $(\id{\cat{C}})_\ast$. In particular, $\id{\cat{C}}$ is both dense and codense. Moreover, $(y_\cat{C})^\ast \colon \psh\cat{C}\longrightarrow\psh\cat{C}$ is the identity $\cat{Q}$-functor $\id{\psh\cat{C}}$ (the Yoneda lemma), so $y_\cat{C}$ is dense (but in general not codense). 
Dually, $y^\dagger_\cat{C}$ is codense (but in general not dense).
(Incidentally, $(y_\cat{C})_\ast=\cat{C}\mot(-)\colon \psh\cat{C}\longrightarrow\copsh\cat{C}$ and $(y^\dagger_\cat{C})^\ast=(-)\mto\cat{C}\colon\copsh\cat{C}\longrightarrow\psh\cat{C}$ \cite[Remark 6.7]{Garner_topological}.)

\emph{The canonical embedding $i_\cat{C}\colon\cat{C}\longrightarrow\Isb\cat{C}$ of $\cat{C}$ into its MacNeille completion is both dense and codense} (\cite[Theorem 4.16]{Shen_Zhang_Isbell_Kan}, \cite[Proposition 7.6]{Garner_topological} and \cite[Theorem 6.5]{Lai_Shen_fixed}), because $(i_\cat{C})^\ast\colon \Isb\cat{C}\longrightarrow\psh\cat{C}$
and $(i_\cat{C})_\ast\colon \Isb\cat{C}\longrightarrow\copsh\cat{C}$
coincide with the (fully faithful) projections $p_\cat{C}$ and $p^\dagger_\cat{C}$ respectively. 

\begin{proposition}[{Cf.~\cite[Lemma 3]{Banaschewski_Bruns}}]\label{prop:ess_emb_characterisation}
	A $\cat{Q}$-functor is an essential embedding if and only if it is a dense and codense embedding.
\end{proposition}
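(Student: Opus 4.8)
The plan is to prove the two implications separately, the key observation in each case being how the Yoneda and co-Yoneda embeddings relate to $f^\ast$ and $f_\ast$.

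For the forward implication, suppose $f\colon\cat C\longrightarrow\cat D$ is an essential embedding; in particular it is an embedding. Since $f$ is fully faithful, a direct computation on objects gives $f^\ast\circ f = y_\cat C$ and $f_\ast\circ f = y^\dagger_\cat C$. As both $y_\cat C$ and $y^\dagger_\cat C$ are embeddings, the essentialness of $f$ (applied once with the $\cat Q$-functor $g=f^\ast$ and once with $g=f_\ast$) forces $f^\ast$ and $f_\ast$ themselves to be embeddings; that is, $f$ is dense and codense.

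For the converse, suppose $f$ is a dense and codense embedding, and let $g\colon\cat D\longrightarrow\cat E$ be a $\cat Q$-functor such that $g\circ f$ is an embedding; we must show that $g$ is an embedding, i.e.\ that $\cat E(g d,g d')\preceq\cat D(d,d')$ for all $d,d'\in\cat D$, the reverse inequality being automatic. Using density of $f$ we rewrite $\cat D(d,d')=\bigwedge_{c\in\cat C}\bigl(\cat D(f c,d')\mot\cat D(f c,d)\bigr)$, and using codensity of $f$ together with its full faithfulness we further rewrite each inner hom as $\cat D(f c,d')=\bigwedge_{c'\in\cat C}\bigl(\cat D(d',f c')\mto\cat C(c,c')\bigr)$. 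Unwinding the two residuation adjunctions~\eqref{eqn:residual_adjointness}, the inequality to be proved becomes equivalent to
\[
\cat D(d',f c')\tms\cat E(g d,g d')\tms\cat D(f c,d)\preceq\cat C(c,c')\qquad\text{for all }c,c'\in\cat C .
\]
This last inequality holds because its left-hand side is bounded, using $\cat Q$-functoriality of $f$ and $g$ and the composition axiom \textbf{(CA2)} in $\cat E$, by $\cat E(g f c,g f c')$, which equals $\cat C(c,c')$ precisely because $g\circ f$ is fully faithful. Hence $g$ is an embedding, and since $f$ is an embedding by hypothesis, $f$ is an essential embedding.

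The delicate point is the converse direction: one must decompose $\cat D(d,d')$ in exactly the right way — first by density, and then expanding the resulting inner hom $\cat D(f c,d')$ by codensity — so that after shuffling the residuation operators, the estimate collapses onto a hom-object of $\cat C$, where the hypothesis on $g\circ f$ can be applied. Neither density nor codensity alone suffices, since the one-sided inequalities coming from $\cat Q$-functoriality then point the wrong way; it is precisely the interplay of the two, mediated by the residuations, that makes the argument close. This is the $\cat Q$-enriched counterpart of the classical fact (\cite[Lemma~3]{Banaschewski_Bruns}) that an essential extension of posets is exactly one in which the subposet is both join-dense and meet-dense.
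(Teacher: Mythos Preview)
Your proof is correct and follows essentially the same route as the paper's own proof: the forward direction via $f^\ast\circ f=y_\cat{C}$ and $f_\ast\circ f=y^\dagger_\cat{C}$, and the converse by expanding $\cat{D}(d,d')$ first through density and then through codensity, reducing via the residuation adjunctions to the inequality $\cat{D}(d',fc')\tms\cat{E}(gd,gd')\tms\cat{D}(fc,d)\preceq\cat{C}(c,c')$, which is dispatched by functoriality of $g$ and full faithfulness of $g\circ f$. The only cosmetic difference is that you invoke full faithfulness of $f$ a step earlier (writing $\cat{C}(c,c')$ in place of $\cat{D}(fc,fc')$ in the codensity expansion), which is immaterial.
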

\begin{proof}
	Suppose that $f\colon \cat{C}\longrightarrow\cat{D}$ is an essential embedding. Then the composite $f^\ast \circ f\colon \cat{C}\longrightarrow\psh\cat{C}$ maps each $c\in \cat{C}$ to $(\cat{D}(fc',fc))_{c'\in\cat{C}}=(\cat{C}(c',c))_{c'\in\cat{C}}$, i.e., it is the Yoneda embedding $y_\cat{C}$.
	In particular, $f^\ast \circ f$ is an embedding and hence so is $f^\ast$, showing that $f$ is dense. A similar argument shows that $f$ is codense.
	
	Conversely, suppose that $f\colon\cat{C}\longrightarrow\cat{D}$ is a dense and codense embedding. Take any $\cat{Q}$-functor $g\colon \cat{D}\longrightarrow\cat{E}$ such that $g\circ f$ is an embedding. We aim to show that $g$ is also an embedding, namely that for each $d,d'\in\cat{D}$ we have $\cat{D}(d,d')=\cat{E}(gd,gd')$. Since $g$ is a $\cat{Q}$-functor, it suffices to show the inequality 
	\begin{equation}\label{eqn:EGDGD}
	\cat{E}(gd,gd')\preceq_\cat{Q}\cat{D}(d,d').
	\end{equation}
	
	Since $f$ is dense, we have 
	\begin{equation}\label{eqn:DDD}
	\cat{D}(d,d')=\psh\cat{C}(f^\ast d,f^\ast d')=\bigwedge_{c\in\cat{C}}\cat{D}(fc,d')\mot \cat{D}(fc,d).
	\end{equation}
	Since $f$ is codense, we have
	\begin{equation}\label{eqn:DFCD}
	\cat{D}(fc,d')=\copsh\cat{C}(f_\ast fc,f_\ast d')=\bigwedge_{c'\in\cat{C}}\cat{D}(d',fc')\mto \cat{D}(fc,fc').
	\end{equation}
	Substituting (\ref{eqn:DFCD}) into (\ref{eqn:DDD}), we obtain
	\begin{align*}
	\cat{D}(d,d')&=\bigwedge_{c\in\cat{C}}\Big(\bigwedge_{c'\in \cat{C}} \cat{D}(d',fc')\mto \cat{D}(fc,fc')\Big)\mot \cat{D}(fc,d)\\
	&= \bigwedge_{c,c'\in\cat{C}}\Big(\cat{D}(d',fc')\mto \cat{D}(fc,fc')\Big)\mot \cat{D}(fc,d)
	\end{align*}
	(cf.~\cite[Theorem 1]{Dress_tight_ext}).
	Hence to show (\ref{eqn:EGDGD}) it suffices to show, for each $c,c'\in\cat{C}$, 
	\[
	\cat{E}(gd,gd') \preceq_\cat{Q}\Big(\cat{D}(d',fc')\mto \cat{D}(fc,fc')\Big)\mot \cat{D}(fc,d).
	\]
	Using the adjointness relation (\ref{eqn:residual_adjointness}) twice, this is equivalent to
	\[
	\cat{D}(d',fc')\tms \cat{E}(gd,gd')\tms \cat{D}(fc,d)\preceq_\cat{Q} \cat{D}(fc,fc'),
	\]
	which can be checked easily as follows:
	\begin{align*}
	\cat{D}(d',fc')\tms \cat{E}(gd,gd')\tms \cat{D}(fc,d)&\preceq_\cat{Q} \cat{E}(gd',gfc')\tms \cat{E}(gd,gd')\tms \cat{E}(gfc,gd)\\
	&\preceq_\cat{Q}\cat{E}(gfc,gfc')\\
	&=\ \ \cat{C}(c,c')\\
	&=\ \ \cat{D}(fc,fc'). \qedhere
	\end{align*}
\end{proof}

Now we can show Theorem \ref{thm:main_envelope} claiming that for any $\cat{Q}$-category $\cat{C}$, $(\Isb\cat{C},i_\cat{C})$ is its injective envelope.
The $\cat{Q}$-category $\Isb\cat{C}$ is skeletal and complete by Proposition \ref{prop:Isb_skeletal_complete}, hence injective by Lemma \ref{lem:complete_injective}.
Since the $\cat{Q}$-functor $i_\cat{C}$ is a dense and codense embedding, it is an essential embedding by Proposition~\ref{prop:ess_emb_characterisation}.

Theorem \ref{thm:main} follows at once.
Since all identity $\cat{Q}$-functors are essential embeddings, by Corollary \ref{cor:inj_env_iso}, a $\cat{Q}$-category $\cat{C}$ is injective if and only if the embedding $i_\cat{C}\colon\cat{C}\longrightarrow\MN\cat{C}$ is an isomorphism. In particular, if $\cat{C}$ is injective, then it is isomorphic to the skeletal and complete $\Isb\cat{C}$, so $\cat{C}$ is also skeletal and complete. The converse has already been shown in Lemma \ref{lem:complete_injective}.

We remark that, being the injective envelope, the MacNeille completion enjoys the extremal properties described in Proposition~\ref{prop:inj_env_property};
cf.~\cite[Proposition 5.5.5]{Shen_thesis} and \cite[Proposition 7.6 (e)]{Garner_topological}.

\section{Isbell convexity as categorical completeness}\label{sec:Isb_conv}
Finally, in this section we clarify the relationship of Theorem \ref{thm:KKO} due to Kemajou, K{\"u}nzi and Otafudu~\cite[Theorem 1]{Kemajou_Kunzi_Otafudu_Isbell_hull}, and the $\cat{Q}=\nonnegRmin$ case of our theorem.

We first work over a general quantale $\cat{Q}$ and provide an alternative description of objects of the MacNeille completion. 
Given (possibly infinite) sets $A$ and $B$, a \defemph{$\cat{Q}$-matrix} from $A$ to $B$ is simply a function $X\colon A\times B\longrightarrow {Q}$ \cite{Betti_et_al_var_enr}.
We denote such a $\cat{Q}$-matrix as $X\colon A\pto B$. 
Given $\cat{Q}$-matrices $X\colon A\pto B$, $Y\colon B\pto C$ and $Z\colon A\pto C$, define the $\cat{Q}$-matrices:
\begin{itemize}
	\item $Y\tms X\colon A\pto C$ as $(Y\tms X)(a,c)=\bigvee_{b\in B}Y(b,c)\tms X(a,b)$;
	\item $Z\mot X\colon B\pto C$ as $(Z\mot X)(b,c)=\bigwedge_{a\in A}Z(a,c)\mot X(a,b)$; and 
	\item $Y\mto Z\colon A\pto B$ as $(Y\mto Z)(a,b)=\bigwedge_{c\in C}Y(b,c)\mto Z(a,c)$.
\end{itemize}
It is routine to check that these operations on $\cat{Q}$-matrices satisfy the adjointness relations analogous to \eqref{eqn:residual_adjointness}, namely 
\[
Y\preceq Z\mot X \iff Y\tms X\preceq Z \iff X\preceq Y\mto Z,
\] 
where we order $\cat{Q}$-matrices of a fixed domain and codomain by the pointwise order induced from $\preceq_\cat{Q}$.
Also, for any set $A$ we have the \defemph{diagonal $\cat{Q}$-matrix} $I_A\colon A\pto A$ defined as $I_A(a,a)=I_\cat{Q}$ and $I_A(a,a')=\bot_\cat{Q}$ (the least element of $\cat{Q}$) whenever $a\neq a'$.
Note that any $\cat{Q}$-category $\cat{C}$ can be seen as a $\cat{Q}$-matrix $\cat{C}\colon\ob{\cat{C}}\pto\ob{\cat{C}}$ satisfying $I_{\ob{\cat{C}}}\preceq \cat{C}$ and $\cat{C}\tms\cat{C}\preceq\cat{C}$.
We can view a presheaf $P$ over $\cat{C}$ as a $\cat{Q}$-matrix $P\colon \ob{\cat{C}}\pto 1$ satisfying $P\tms \cat{C}\preceq P$, where $1$ denotes a singleton. Dually, a copresheaf $R$ over $\cat{C}$ is a $\cat{Q}$-matrix $R\colon 1\pto \ob{\cat{C}}$ satisfying $\cat{C}\tms R\preceq R$.
The notations for the $\cat{Q}$-functors \eqref{eqn:Isbell_adj} constituting the Isbell adjunction agree with those for the above operations on $\cat{Q}$-matrices.

\begin{proposition}\label{prop:Isbell_hull_alternatively}
	For a $\cat{Q}$-category $\cat{C}$, define the set 
	\[
	\UA_\cat{C}=\{\,(X,Y)\mid X\colon \ob{\cat{C}}\pto 1,Y\colon 1\pto \ob{\cat{C}} \text{ and }Y\tms X \preceq \cat{C}\,\}.
	\]
	A pair $(X,Y)$ of $\cat{Q}$-matrices  $X\colon \ob{\cat{C}}\pto 1$ and $Y\colon 1\pto \ob{\cat{C}}$ belongs to $\Isb\cat{C}$
	if and only if it is maximal in $\UA_\cat{C}$,
	in the sense that (i)~$(X,Y)\in\UA_\cat{C}$, and (ii)~for any pair $(X',Y')\in\UA_\cat{C}$, if $X\preceq X'$
	and $Y\preceq Y'$ then $X=X'$ and $Y=Y'$.
\end{proposition}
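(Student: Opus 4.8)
The plan is to recognise $\UA_\cat{C}$ as the set of pairs lying \emph{below} the $\cat{Q}$-matrix $\cat{C}$ with respect to the composition $\tms$, and to exploit the Galois connection (polarity) that this sets up. Concretely, by the matrix adjointness recorded just above the statement, for $X\colon\ob{\cat{C}}\pto 1$ and $Y\colon 1\pto\ob{\cat{C}}$ one has
\[
(X,Y)\in\UA_\cat{C}\iff Y\tms X\preceq\cat{C}\iff Y\preceq\cat{C}\mot X\iff X\preceq Y\mto\cat{C}.
\]
First I would use this to characterise the maximal elements of $\UA_\cat{C}$ as exactly the \emph{closed} pairs, those with $Y=\cat{C}\mot X$ and $X=Y\mto\cat{C}$; then I would check that a closed pair is automatically a (presheaf, copresheaf) pair, and hence an object of $\Isb\cat{C}$; the converse inclusion is immediate from the definition of $\Isb\cat{C}$. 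Note that the point of the second half is precisely that membership in $\UA_\cat{C}$ does not by itself demand that $X$ be a presheaf or $Y$ a copresheaf --- only maximality forces this.

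For the first step, order $\cat{Q}$-matrices of a fixed shape pointwise and observe that $\cat{C}\mot(-)$ and $(-)\mto\cat{C}$ are antitone, and that $(\cat{C}\mot X)\tms X\preceq\cat{C}$ and $Y\tms(Y\mto\cat{C})\preceq\cat{C}$ hold for all $X,Y$ (apply the adjointness to the identities $\cat{C}\mot X\preceq\cat{C}\mot X$ and $Y\mto\cat{C}\preceq Y\mto\cat{C}$). Hence, given any $(X,Y)\in\UA_\cat{C}$, both $(X,\cat{C}\mot X)$ and $(Y\mto\cat{C},Y)$ lie in $\UA_\cat{C}$ and dominate $(X,Y)$ coordinatewise (using $Y\preceq\cat{C}\mot X$ and $X\preceq Y\mto\cat{C}$); so if $(X,Y)$ is maximal then $Y=\cat{C}\mot X$ and $X=Y\mto\cat{C}$. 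Conversely, if $Y=\cat{C}\mot X$ and $X=Y\mto\cat{C}$, then $(X,Y)\in\UA_\cat{C}$, and for any $(X',Y')\in\UA_\cat{C}$ with $X\preceq X'$ and $Y\preceq Y'$ antitonicity gives $Y'\preceq\cat{C}\mot X'\preceq\cat{C}\mot X=Y\preceq Y'$, so $Y=Y'$, and symmetrically $X=X'$; thus $(X,Y)$ is maximal.

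For the second step I would show that $\cat{C}\mot X$ is always a copresheaf and $Y\mto\cat{C}$ always a presheaf. For instance $(Y\mto\cat{C})\tms\cat{C}\preceq Y\mto\cat{C}$ is equivalent, by adjointness and reassociating, to $\bigl(Y\tms(Y\mto\cat{C})\bigr)\tms\cat{C}\preceq\cat{C}$, which follows from $Y\tms(Y\mto\cat{C})\preceq\cat{C}$ together with $\cat{C}\tms\cat{C}\preceq\cat{C}$; the dual computation handles $\cat{C}\mot X$. Consequently a closed pair $(X,Y)$ has $X=Y\mto\cat{C}\in\psh\cat{C}$ and $Y=\cat{C}\mot X\in\copsh\cat{C}$ with $X=Y\mto\cat{C}$ and $Y=\cat{C}\mot X$, i.e.\ $(X,Y)\in\Isb\cat{C}$; and any object $(P,R)$ of $\Isb\cat{C}$ satisfies $R=\cat{C}\mot P$ and $P=R\mto\cat{C}$ by definition, hence is a closed pair, hence maximal in $\UA_\cat{C}$ by the first step. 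Combining the two steps yields the proposition.

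I expect the only delicate point to be bookkeeping in the $\cat{Q}$-matrix calculus --- keeping the domains and codomains of $X$, $Y$, $\cat{C}\mot X$, $Y\mto\cat{C}$ straight, and invoking associativity of $\tms$ and the matrix adjointness in the correct direction. Conceptually there is no real obstacle: once the polarity is isolated, ``maximal $=$ Galois-closed'' is the standard Birkhoff picture, and the presheaf/copresheaf conditions come for free from $\cat{C}\tms\cat{C}\preceq\cat{C}$.
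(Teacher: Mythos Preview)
Your argument is correct and is essentially the paper's own proof, organised slightly differently: you separate the equivalence into ``maximal $\Leftrightarrow$ closed'' and ``closed $\Leftrightarrow$ object of $\Isb\cat{C}$'', whereas the paper treats the two directions of the proposition in turn, but the computational steps (using the dominating pairs $(X,\cat{C}\mot X)$ and $(Y\mto\cat{C},Y)$ for one direction, antitonicity for the other, and deducing the (co)presheaf axioms from $Y\tms X\preceq\cat{C}$ together with $\cat{C}\tms\cat{C}\preceq\cat{C}$) are identical. Your explicit Galois-connection framing is a nice gloss but does not change the underlying argument.
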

\begin{proof}
	Suppose $(P,R)\in \Isb\cat{C}$.
	Then $R\tms P=(\cat{C}\mot P)\tms P\preceq \cat{C}$,
	so $(P,R)\in\UA_\cat{C}$. 
	Given any $(X',Y')\in\UA_\cat{C}$, 
	$P\preceq X'$ implies $Y'\preceq \cat{C}\mot X'\preceq \cat{C}\mot P=R$;
	similarly, $R\preceq Y'$ implies $X'\preceq P$.
	
	Conversely, suppose that $(X,Y)$ satisfies conditions (i) and (ii).
	Then the pair $(X,\cat{C}\mot X)$ satisfies 
	$(\cat{C}\mot X)\tms X\preceq \cat{C}$, $X\preceq X$ and, by (i), $Y\preceq \cat{C}\mot X$. So by (ii) we conclude $Y=\cat{C}\mot X$.
	Similarly, using the pair $(Y\mto \cat{C},Y)$ we see that $X=Y\mto \cat{C}$.
	In order to show that $X$ is a presheaf over $\cat{C}$, it suffices to show $X\tms \cat{C}\preceq X$. Since $X=Y\mto \cat{C}$, it suffices to show $X\tms \cat{C}\preceq Y\mto\cat{C}$, which is equivalent to $Y\tms X\tms \cat{C}\preceq\cat{C}$. Using $Y\tms X\preceq\cat{C}$ and $\cat{C}\tms\cat{C}\preceq\cat{C}$, we obtain the desired result. Similarly, $Y$ is a copresheaf over $\cat{C}$.
\end{proof}

\begin{proposition}
	Let $\cat{C}$ be a $\cat{Q}$-category. For each $(X,Y)\in\UA_\cat{C}$, there exists $(P,R)\in\MN\cat{C}$ such that $X\preceq P$ and $Y\preceq R$.
\end{proposition}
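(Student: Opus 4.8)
The plan is to prove this by exhibiting, for each $(X,Y)\in\UA_\cat{C}$, an \emph{explicit} dominating pair in $\MN\cat{C}$, namely its ``Isbell closure''. Given $(X,Y)\in\UA_\cat{C}$, I would put
\[
R=\cat{C}\mot X\colon 1\pto\ob{\cat{C}},\qquad P=R\mto\cat{C}=(\cat{C}\mot X)\mto\cat{C}\colon\ob{\cat{C}}\pto 1,
\]
and establish four claims: (a)~$R$ is a copresheaf and $P$ a presheaf over $\cat{C}$; (b)~$(P,R)\in\MN\cat{C}$; (c)~$X\preceq P$; (d)~$Y\preceq R$. Throughout one uses only $I_{\ob{\cat{C}}}\preceq\cat{C}$, $\cat{C}\tms\cat{C}\preceq\cat{C}$, and the $\cat{Q}$-matrix adjointness $Y\preceq Z\mot X\iff Y\tms X\preceq Z\iff X\preceq Y\mto Z$.

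For (d): the hypothesis $(X,Y)\in\UA_\cat{C}$ reads $Y\tms X\preceq\cat{C}$, which by adjointness is exactly $Y\preceq\cat{C}\mot X=R$. For (c): $X\preceq P=(\cat{C}\mot X)\mto\cat{C}$ is, again by adjointness, equivalent to $(\cat{C}\mot X)\tms X\preceq\cat{C}$, which always holds (this is the inequality already exploited in the proof of Proposition~\ref{prop:Isbell_hull_alternatively}). For (a): from $(\cat{C}\mot X)\tms X\preceq\cat{C}$ one gets $\cat{C}\tms R\tms X=\cat{C}\tms(\cat{C}\mot X)\tms X\preceq\cat{C}\tms\cat{C}\preceq\cat{C}$, hence $\cat{C}\tms R\preceq\cat{C}\mot X=R$, i.e. $R\in\copsh\cat{C}$; and since $R\tms P=R\tms(R\mto\cat{C})\preceq\cat{C}$, one gets $(R\tms P)\tms\cat{C}\preceq\cat{C}\tms\cat{C}\preceq\cat{C}$, i.e. $P\tms\cat{C}\preceq R\mto\cat{C}=P$, so $P\in\psh\cat{C}$ (this is the same kind of computation as in the proof of Proposition~\ref{prop:Isbell_hull_alternatively}). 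For (b): $P=R\mto\cat{C}$ holds by definition, and $\cat{C}\mot P=R$ by a two-line argument — $\cat{C}\mot P\preceq\cat{C}\mot X=R$ because $\cat{C}\mot(-)$ is order-reversing and $X\preceq P$, while $R\preceq\cat{C}\mot P$ because $R\tms P\preceq\cat{C}$ — so $(P,R)$ satisfies the defining condition (CD1) of the MacNeille completion, giving $(P,R)\in\MN\cat{C}$.

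I do not anticipate a real obstacle: in the $\cat{Q}$-matrix calculus each step is a one- or two-line manipulation, and the construction of $(P,R)$ is essentially complementary to the arguments used to prove Proposition~\ref{prop:Isbell_hull_alternatively}. The point deserving slight care is (a), i.e. checking that $P$ and $R$ are genuinely a presheaf and a copresheaf and not merely $\cat{Q}$-matrices of the right shape. As an alternative — more in keeping with the characterisation of $\MN\cat{C}$ as the set of maximal elements of $\UA_\cat{C}$ — one could argue by Zorn's lemma: order $\UA_\cat{C}$ by $(X,Y)\le(X',Y')\iff X\preceq X'$ and $Y\preceq Y'$; the pointwise supremum of any chain again lies in $\UA_\cat{C}$, because $\tms$ preserves suprema and of any two members of a chain one dominates the other, so every cross-term stays $\preceq\cat{C}$; hence every element of $\UA_\cat{C}$ lies below a maximal one, which by Proposition~\ref{prop:Isbell_hull_alternatively} belongs to $\MN\cat{C}$. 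I would nonetheless favour the explicit construction above, as it avoids the axiom of choice and produces the required element of $\MN\cat{C}$ directly.
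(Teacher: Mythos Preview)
Your proof is correct and uses the same explicit construction $(P,R)=((\cat{C}\mot X)\mto\cat{C},\,\cat{C}\mot X)$ as the paper, together with the same Zorn's-lemma alternative. The only minor difference is that the paper verifies $(P,R)\in\MN\cat{C}$ via the maximality characterisation of Proposition~\ref{prop:Isbell_hull_alternatively}, whereas you verify the defining fixed-point condition $P=R\mto\cat{C}$, $R=\cat{C}\mot P$ directly; both routes are equally short.
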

\begin{proof}
	This is immediate from Zorn's lemma, but a more explicit proof is also possible.
	
	We claim that $(P,R)=((\cat{C}\mot X)\mto\cat{C},\cat{C}\mot X)$ has the desired properties. 
	
	First, we have $X\preceq P$ and $Y\preceq R$, since the former is equivalent to $(\cat{C}\mot X)\tms X\preceq \cat{C}$, which in turn is equivalent to $\cat{C}\mot X\preceq \cat{C}\mot X$, whereas the latter is equivalent to $Y\tms X\preceq \cat{C}$.
	
	We show $(P,R)\in\MN\cat{C}$ using Proposition \ref{prop:Isbell_hull_alternatively}. $(P,R)$ is in $\UA_\cat{C}$ because $R\tms P=(\cat{C}\mot X)\tms ((\cat{C}\mot X)\mto\cat{C})\preceq \cat{C}$ is equivalent to  $(\cat{C}\mot X)\mto\cat{C}\preceq (\cat{C}\mot X)\mto\cat{C}$.
	To show $(P,R)$ is a maximal element in $\UA_\cat{C}$, suppose we are given any $(X',Y')\in \UA_\cat{C}$ with $P\preceq X'$ and $R\preceq Y'$. $R\preceq Y'$ implies $X'\preceq Y'\mto \cat{C}\preceq R\mto \cat{C}=P$; so we have $P= X'$. Using $X\preceq P=X'$, we have $Y'\tms X\preceq Y'\tms X'\preceq \cat{C}$, which is equivalent to $Y'\preceq \cat{C}\mot X=R$. So we also have $R=Y'$.
\end{proof}

\begin{remark}
	Jawhari, Misane and Pouzet define an analogue of the MacNeille completion for \emph{symmetric}\footnote{A $\cat{Q}$-category $\cat{C}$ over a commutative $\cat{Q}$ is \emph{symmetric} if $\cat{C}(c,c')=\cat{C}(c',c)$ for all $c,c'\in\cat{C}$.} $\cat{Q}$-categories over a \emph{commutative} (or more generally \emph{involutive}) \emph{integral}\footnote{A quantale $\cat{Q}$ is \emph{integral} if the unit $I_\cat{Q}$ is the greatest element in $({Q},\preceq_\cat{Q})$.} quantale $\cat{Q}$, and show that it is the injective envelope \cite{Jawhari_Misane_Pouzet_retracts}. Their definition is a variant of the alternative description of the MacNeille completion given in Proposition \ref{prop:Isbell_hull_alternatively}. As mentioned in Section \ref{sec:intro}, their result generalises Theorem \ref{thm:AP}.
\end{remark}

We can characterise complete $\cat{Q}$-categories in terms of the canonical embedding to the MacNeille completion.

\begin{proposition}
	A $\cat{Q}$-category $\cat{C}$ is complete if and only if the embedding $i_\cat{C}\colon\cat{C}\longrightarrow\MN\cat{C}$ is surjective (as a function between the sets of objects).
\end{proposition}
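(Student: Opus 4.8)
The plan is to prove the two implications separately. The backward implication (surjectivity implies completeness) is a transport of structure along an equivalence; the forward implication is the substantive one and rests on the theory of weighted colimits.

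For the backward direction, assume $i_\cat{C}\colon\cat{C}\longrightarrow\MN\cat{C}$ is surjective on objects. Since $i_\cat{C}$ is always a fully faithful $\cat{Q}$-functor, being moreover surjective it is an equivalence of $\cat{Q}$-categories, and as $\MN\cat{C}$ is complete (Proposition~\ref{prop:Isb_skeletal_complete}) so is $\cat{C}$. Spelled out: given $c\in\cat{C}$ and $x\in Q$, the power $x\pitchfork i_\cat{C}(c)$, which exists in $\MN\cat{C}$, equals $i_\cat{C}(c')$ for some $c'\in\cat{C}$ by surjectivity, and then full faithfulness of $i_\cat{C}$ gives $\cat{C}(d,c')=\MN\cat{C}(i_\cat{C}d,i_\cat{C}c')=x\mto\cat{C}(d,c)$ for all $d\in\cat{C}$, so $c'=x\pitchfork c$; copowers are dual; and order-completeness transfers because full faithfulness makes $i_\cat{C}$ induce an order-isomorphism between the poset reflections of $(\ob{\cat{C}},\preceq_\cat{C})$ and $(\ob{\MN\cat{C}},\preceq_{\MN\cat{C}})$.

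For the forward direction, assume $\cat{C}$ is complete. Then for every presheaf $P\in\psh\cat{C}$ the weighted colimit $\sup_\cat{C}P:=\bigvee_{c\in\cat{C}}(Pc\ast c)$ exists in $\cat{C}$ and satisfies $\cat{C}(\sup_\cat{C}P,d)=\psh\cat{C}(P,y_\cat{C}(d))$ for all $d\in\cat{C}$ \cite{Stubbe_tensor_cotensor,Stubbe_quantaloid_dist}; equivalently, the Yoneda embedding $y_\cat{C}$ has a left adjoint $\sup_\cat{C}\colon\psh\cat{C}\longrightarrow\cat{C}$. Now take any $(P,R)\in\MN\cat{C}$ and set $c_0=\sup_\cat{C}P$; I claim $i_\cat{C}(c_0)=(P,R)$, i.e.\ $\cat{C}(-,c_0)=P$ and $\cat{C}(c_0,-)=R$. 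Unwinding the colimit formula, $\cat{C}(c_0,d)=\bigwedge_{c\in\cat{C}}\bigl(\cat{C}(c,d)\mot Pc\bigr)=(\cat{C}\mot P)(d)=Rd$, using $R=\cat{C}\mot P$; this settles the second coordinate. For the first, taking $d=c_0$ in the colimit formula gives $\psh\cat{C}(P,y_\cat{C}(c_0))=\cat{C}(c_0,c_0)\succeq_\cat{Q}\one_\cat{Q}$ by (CA1), which unpacks to $Pd\preceq_\cat{Q}\cat{C}(d,c_0)$ for all $d$; conversely, since $(P,R)\in\MN\cat{C}$ we have $P=R\mto\cat{C}$, hence $Pd=\bigwedge_{c\in\cat{C}}\bigl(\cat{C}(c_0,c)\mto\cat{C}(d,c)\bigr)\succeq_\cat{Q}\cat{C}(d,c_0)$ by axiom (CA2) and the adjointness~\eqref{eqn:residual_adjointness}. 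Hence $\cat{C}(-,c_0)=P$, so $i_\cat{C}(c_0)=(P,R)$ and $i_\cat{C}$ is surjective.

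I expect the main obstacle to be the step invoking that the paper's notion of completeness (powered, copowered, order-complete) actually yields all weighted colimits, in particular the left adjoint $\sup_\cat{C}$ to $y_\cat{C}$; this is due to Stubbe. If one prefers to avoid citing this and argue directly with $c_0=\bigvee_{c}(Pc\ast c)$, then the one genuinely non-formal point is the inequality $(\cat{C}\mot P)(d)\preceq_\cat{Q}\cat{C}(c_0,d)$ needed for $\cat{C}(c_0,-)=R$: writing $q=(\cat{C}\mot P)(d)$, one forms the power $q\pitchfork d$ (this is where copowers and order-suprema alone do not suffice), observes $Pc\ast c\preceq_\cat{C}q\pitchfork d$ for every $c$, hence $c_0\preceq_\cat{C}q\pitchfork d$, hence $q\preceq_\cat{Q}\cat{C}(c_0,d)$. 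Everything else reduces to short residuation computations with~\eqref{eqn:residual_adjointness} and the $\cat{Q}$-category axioms.
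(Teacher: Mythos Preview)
Your argument is correct. The paper's own proof is almost entirely by citation (Stubbe for ``fully faithful and surjective onto a skeletal category implies equivalence'', Garner for the forward implication), so your writeup is strictly more informative: you spell out exactly the mechanism behind the cited result, namely that completeness in the sense of Definition~\ref{def:completeness} yields all weighted colimits, so $c_0=\sup_\cat{C}P$ exists and satisfies $\cat{C}(c_0,-)=\cat{C}\mot P=R$ and $\cat{C}(-,c_0)=R\mto\cat{C}=P$. Your final paragraph, isolating the use of powers in the inequality $(\cat{C}\mot P)(d)\preceq_\cat{Q}\cat{C}(c_0,d)$, is precisely the content of Stubbe's theorem that powered $+$ copowered $+$ order-complete suffices for cocompleteness, and is a nice touch.
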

\begin{proof}
	Since $i_\cat{C}$ is always fully faithful and $\MN\cat{C}$ skeletal, $i_\cat{C}$ is surjective if and only if it is an equivalence of $\cat{Q}$-categories \cite[Proposition 4.4]{Stubbe_quantaloid_dist}. Since completeness is invariant under equivalence, if $i_\cat{C}$ is surjective then $\cat{C}$ is complete. For the converse, see e.g., \cite[Proposition 7.6]{Garner_topological}.
\end{proof}

\begin{corollary}\label{cor:completeness_alternatively}
	A $\cat{Q}$-category $\cat{C}$ is complete if and only if, given any $(X,Y)\in\UA_\cat{C}$, there exists $c\in\cat{C}$ such that $X\preceq \cat{C}(-,c)$ and $Y\preceq \cat{C}(c,-)$.
\end{corollary}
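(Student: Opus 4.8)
The plan is to deduce this directly from the preceding proposition, which characterises completeness of $\cat{C}$ as surjectivity of $i_\cat{C}\colon\cat{C}\longrightarrow\MN\cat{C}$, combined with the two facts just established: that every $(X,Y)\in\UA_\cat{C}$ lies below some $(P,R)\in\MN\cat{C}$, and that the objects of $\MN\cat{C}$ are exactly the maximal pairs in $\UA_\cat{C}$ (Proposition~\ref{prop:Isbell_hull_alternatively}). The one observation to keep in mind throughout is that $i_\cat{C}(c)=(\cat{C}(-,c),\cat{C}(c,-))$, and that this pair belongs to $\UA_\cat{C}$, since $\cat{C}(c,-)\tms\cat{C}(-,c)\preceq\cat{C}$ is nothing but axiom (CA2) rephrased for $\cat{Q}$-matrices.

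For the forward implication, I would assume $\cat{C}$ complete, so that $i_\cat{C}$ is surjective. Given $(X,Y)\in\UA_\cat{C}$, the proposition on elements of $\UA_\cat{C}$ supplies $(P,R)\in\MN\cat{C}$ with $X\preceq P$ and $Y\preceq R$, and surjectivity produces $c\in\cat{C}$ with $(P,R)=i_\cat{C}(c)=(\cat{C}(-,c),\cat{C}(c,-))$; then $X\preceq\cat{C}(-,c)$ and $Y\preceq\cat{C}(c,-)$, as wanted.

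For the converse, I would assume the stated condition and show that $i_\cat{C}$ is surjective. Take an arbitrary $(P,R)\in\MN\cat{C}$; by Proposition~\ref{prop:Isbell_hull_alternatively} it lies in $\UA_\cat{C}$, so the hypothesis yields $c\in\cat{C}$ with $P\preceq\cat{C}(-,c)$ and $R\preceq\cat{C}(c,-)$. Since $(\cat{C}(-,c),\cat{C}(c,-))$ is itself in $\UA_\cat{C}$ and $(P,R)$ is maximal there, both inequalities must be equalities, so $(P,R)=i_\cat{C}(c)$ lies in the image of $i_\cat{C}$. Hence $i_\cat{C}$ is surjective and $\cat{C}$ is complete by the preceding proposition.

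I do not expect any real obstacle here: the statement is essentially a translation of the previous proposition through the matrix description of $\MN\cat{C}$. The only step needing a moment's attention is the converse, where upgrading $\preceq$ to $=$ relies on the maximality of objects of $\MN\cat{C}$ in $\UA_\cat{C}$, which in turn requires first checking that $(\cat{C}(-,c),\cat{C}(c,-))$ genuinely lies in $\UA_\cat{C}$ before maximality can be invoked.
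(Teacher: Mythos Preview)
Your argument is correct and is exactly what the paper intends: the corollary is stated without proof, as an immediate consequence of the preceding proposition (completeness $\Leftrightarrow$ surjectivity of $i_\cat{C}$) together with Proposition~\ref{prop:Isbell_hull_alternatively} and the proposition guaranteeing that every $(X,Y)\in\UA_\cat{C}$ is dominated by some $(P,R)\in\MN\cat{C}$. You have simply spelled out the two directions in the obvious way.
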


Now let us specialise to the case $\cat{Q}=\nonnegRmin$.
Extending the notion of Isbell convexity for di-spaces slightly, let us say a Lawvere metric space (= $\nonnegRmin$-category) $\cat{C}$ is \defemph{Isbell convex} if, for any family $((c_i,x_i, y_i))_{i\in I}$ where $c_i\in \cat{C}$ and $ x_i,y_i\in [0,\infty]$, if $x_i +y_j\geq \cat{C}(c_i,c_j)$ holds for each $i,j\in I$, then there exists $c\in\cat{C}$ such that $x_i\geq \cat{C}(c_i,c)$ and $y_i\geq \cat{C}(c,c_i)$ for all $i\in I$. 

\begin{proposition}
	A Lawvere metric space is Isbell convex if and only if it is complete (in the sense of Definition~\ref{def:completeness}).
\end{proposition}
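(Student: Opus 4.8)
The plan is to read this equivalence off from Corollary~\ref{cor:completeness_alternatively}, after translating its statement into metric language. Specialising to $\cat{Q}=\nonnegRmin$, a $\cat{Q}$-matrix $X\colon\ob{\cat{C}}\pto 1$ (resp.\ $Y\colon 1\pto\ob{\cat{C}}$) is nothing but a function $\ob{\cat{C}}\longrightarrow[0,\infty]$; recalling that $\preceq_\cat{Q}$ is $\geq$ and $\tms_\cat{Q}$ is $+$, the condition $(X,Y)\in\UA_\cat{C}$, i.e.\ $Y\tms X\preceq\cat{C}$, unwinds to ``$X(c)+Y(c')\geq\cat{C}(c,c')$ for all $c,c'\in\cat{C}$'', while the conclusion ``$X\preceq\cat{C}(-,c)$ and $Y\preceq\cat{C}(c,-)$'' unwinds to ``$X(c')\geq\cat{C}(c',c)$ and $Y(c')\geq\cat{C}(c,c')$ for all $c'\in\cat{C}$''. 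Thus Corollary~\ref{cor:completeness_alternatively} reads: $\cat{C}$ is complete if and only if, for every pair of functions $X,Y\colon\ob{\cat{C}}\longrightarrow[0,\infty]$ with $X(c)+Y(c')\geq\cat{C}(c,c')$ for all $c,c'$, there is some $c\in\cat{C}$ with $X(c')\geq\cat{C}(c',c)$ and $Y(c')\geq\cat{C}(c,c')$ for all $c'$.

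Comparing this with the definition of Isbell convexity, the only discrepancy is that an Isbell-convexity datum is a \emph{family} $((c_i,x_i,y_i))_{i\in I}$ over an arbitrary index set, in which the points $c_i$ may repeat, whereas the criterion above speaks of functions on $\ob{\cat{C}}$, i.e.\ of one pair $(x,y)$ per object. For the direction ``complete $\Rightarrow$ Isbell convex'' I would pass from a family to functions by taking pointwise infima: set $X(c)=\inf\{\,x_i\mid c_i=c\,\}$ and $Y(c)=\inf\{\,y_i\mid c_i=c\,\}$, with the convention $\inf\emptyset=\infty$. One checks $X(c)+Y(c')\geq\cat{C}(c,c')$ for all $c,c'$ (trivial if either index set is empty, and otherwise using $\inf A+\inf B=\inf(A+B)$ in $[0,\infty]$ together with the fact that $x_i+y_j\geq\cat{C}(c_i,c_j)=\cat{C}(c,c')$ whenever $c_i=c$ and $c_j=c'$), so $(X,Y)$ satisfies the hypothesis of the criterion; the resulting $c\in\cat{C}$ then satisfies $x_i\geq X(c_i)\geq\cat{C}(c_i,c)$ and $y_i\geq Y(c_i)\geq\cat{C}(c,c_i)$ for every $i$, as required. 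For the converse ``Isbell convex $\Rightarrow$ complete'' no reindexing is needed: given $X,Y$ as in the criterion, view them as the family $((c,X(c),Y(c)))_{c\in\ob{\cat{C}}}$, whose defining inequality is precisely the Isbell-convexity hypothesis and whose Isbell-convexity conclusion is exactly what is wanted.

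There is no genuinely hard step here; the work is entirely in correctly matching the residual-free formulation of Corollary~\ref{cor:completeness_alternatively} with the metric inequalities, and the one point needing a little care is the infimum argument used to reduce an arbitrary indexed family to a single function on $\ob{\cat{C}}$. The boundary case $I=\emptyset$ is handled automatically: a complete $\cat{Q}$-category is order-complete, hence nonempty, so the empty family's (vacuous) demand for some point $c\in\cat{C}$ is met.
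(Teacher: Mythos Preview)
Your argument is correct and follows the paper's approach, which simply declares the proposition ``immediate from Corollary~\ref{cor:completeness_alternatively}'' without further comment. You have been more careful than the paper in explicitly handling the discrepancy between families indexed over an arbitrary $I$ (with possible repetition of points) and functions on $\ob{\cat{C}}$; your pointwise-infimum reduction is the right way to close that small gap, which the paper leaves to the reader.
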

The above proposition is immediate from Corollary \ref{cor:completeness_alternatively}.
So, modulo the above discussion (and the difference between Lawvere metric spaces and di-spaces), our Theorem~\ref{thm:main} yields Theorem~\ref{thm:KKO} when $\cat{Q}=\nonnegRmin$.
We remark that the proof of Theorem \ref{thm:KKO} in \cite{Kemajou_Kunzi_Otafudu_Isbell_hull}, however, relies heavily on Zorn's lemma and is quite different from our proof.

\medskip

A completely parallel comment applies to the relationship of our theorem when $\cat{Q}=\nonnegRminmax$ and K{\"u}nzi and Otafudu's characterisation of injective $\nonnegRminmax$-categories by \emph{q-spherical completeness} in \cite[Theorem~2]{Kunzi_Otafudu_ultra}.

\bibliographystyle{plain} %
\bibliography{myref} %
\end{document}